\numberwithin{equation}{section}
\newtheorem{thm}{Theorem}[section]
\newtheorem{cor}[thm]{Corollary}
\newtheorem{lem}[thm]{Lemma}
\newtheorem{prop}[thm]{Proposition}
\newtheorem{rmk}[thm]{Remark}
\numberwithin{equation}{section}
\newcommand*{\Rmn}[1]{\uppercase\expandafter{\romannumeral#1}}
\newcommand{\eps}{{\varepsilon}}
\newcommand{\R}{{\mathbf R}}
\newcommand{\be}{\begin{equation*}}
\newcommand{\ee}{\end{equation*}}
\newcommand{\beq}{\begin{equation}}
\newcommand{\eeq}{\end{equation}}
\newcommand{\begincal}{\begin{eqnarray*}}
\newcommand{\fincal}{\end{eqnarray*}}
\begin{document}

\title[Uniqueness of conformal-harmonic maps]{Uniqueness of conformal-harmonic maps on locally conformally flat 4-manifolds}

\author{Longzhi Lin \and Jingyong Zhu}
\address{Longzhi Lin: Mathematics Department, University of California Santa Cruz, 1156 High Street, Santa Cruz, CA 95064, USA}
\email{lzlin@ucsc.edu}

\address{Jingyong Zhu: Department of Mathematics, Sichuan University, No.24 South Section 1, Yihuan Road, Chengdu, 610065, China}
\email{jzhu@scu.edu.cn}

\subjclass[2020]{58E15, 35J58, 58E20}
\keywords{conformal-harmonic map, uniqueness, Hardy inequality}

\begin{abstract}
Motivated by the theory of harmonic maps on Riemannian surfaces, conformal-harmonic maps between two Riemannian manifolds $\mathcal{M}$ and $\mathcal{N}$ were introduced in search of a natural notion of ``harmonicity'' for maps defined on a general even dimensional Riemannian manifold $\mathcal{M}$. They are critical points of a conformally invariant energy functional and reassemble the GJMS operators when the target is the set of real or complex numbers. On a four dimensional manifold, conformal-harmonic maps are the conformally invariant counterparts of the intrinsic bi-harmonic maps and a mapping version of the conformally invariant Paneitz operator for functions.

In this paper, we consider conformal-harmonic maps from locally conformally flat 4-manifolds into spheres. We prove a quantitative uniqueness result for such conformal-harmonic maps as an immediate consequence of convexity for the conformally-invariant energy functional. To this end, we are led to prove a version of second order Hardy inequality on manifolds, which may be of independent interest.
\end{abstract}
\maketitle

\section{Introduction}
The most prominent and classic problem in calculus of variations for mappings between two Riemannian manifolds $(\mathcal{M}, g)$ and $(\mathcal{N},h)\hookrightarrow \mathbf{R}^K$ is the study of {\textit{harmonic maps}}, which are critical points $u: \mathcal{M}\to \mathcal{N}$ of the Dirichlet energy
$$
E_1(u, \mathcal{M}):=\int_\mathcal{M} |du|^2\omega_g\,,
$$
where $\omega_g$ is the volume measure on $\mathcal{M}$ defined by the metric $g$ and $|du|^2$ is the Hilbert-Schmidt norm square of $du$. The conformal invariance of $E_1(u, \mathcal{M})$ on a Riemannian surface $\mathcal{M}$ (with respect to the metric $g$ of $\mathcal{M}$) and its connection to the theory of minimal surfaces make harmonic maps on two dimensional domains the most widely studied topic in the field of geometric analysis ever since the pioneering work of J. Eells and J. Sampson \cite{ES64}, see also \cite{He02}. Motivated by the theory of harmonic maps on Riemannian surfaces, the Paneitz operator \cite{Pa83} and GJMS operators \cite{GJMS92}, in a recent work \cite{Berard08} V. B\'{e}rard has shown the existence of an intrinsically defined energy functional for smooth maps between two Riemannian manifolds $(\mathcal{M}^{2m}, g)$ (of even dimension $2m$ where $m\geq 1$) and $\mathcal{N}$, denoted by $E_m(u, \mathcal{M})$, which is {\textit{conformally invariant}} with respect to $g$ and coincides with the above Dirichlet energy $E_1(u, \mathcal{M})$ when $m=1$. Following the terminology of A. Gastel and A. Nerf in \cite{GN13} who considered an \textit{extrinsic} analogue (i.e., a variant dependent of the embedding $\mathcal{N}\hookrightarrow \mathbf{R}^K$) of $E_m(u, \mathcal{M})$, we will call $E_m(u, \mathcal{M})$ an {\textit{intrinsic}} Paneitz energy when $m=2$ and {\textit{intrinsic}} Paneitz poly-energy when $m\geq 3$. The critical points of $E_m(u, \mathcal{M})$ are called {\textit {conformal-harmonic maps}} or {\textit{C-harmonic maps}}, which generalize the harmonic maps on surfaces (i.e., $m=1$) and satisfy a system of nonlinear PDEs with leading term $\Delta^{m}$. When $\mathcal{N}=\mathbf{R}$ or $\mathbf{C}$, the induced operators for the critical points reassemble the GJMS operators \cite{GJMS92}, see also Chang-Yang \cite{CY99}. In particular, when $m=2$ the intrinsic Paneitz energy reads as
\begin{equation}\label{energy}
\mathcal{E}(u, \mathcal{M}):=E_2(u,\mathcal{M}^4)=\int_\mathcal{M}|\tau(u)|^2+\frac23 {\rm Sc}^\mathcal{M}|du|^2-2{\rm Ric}^\mathcal{M}(du,du)\,,
\end{equation}
where $\tau(u)\in u^*T\mathcal{N}$ is the tension field, we denote the scalar curvature and Ricci curvature of $\mathcal{M}$ by ${\rm Sc}^\mathcal{M}$ and ${\rm Ric}^{\mathcal{M}}$, respectively. We remark that $\mathcal{E}(u,\mathcal{M})$ is conformally invariant on a four dimensional manifold $\mathcal{M}$ (see Appendix \ref{appA}) and critical points of $\mathcal{E}(u, \mathcal{M})$ are the conformal-invariant counterparts of the {\textit{intrinsic bi-harmonic maps}}, i.e., the critical points of the {\textit{intrinsic bi-energy}} 
$$\mathcal{F}(u, \mathcal{M}):=\int_\mathcal{M}|\tau(u)|^2\,.$$
Conformal-harmonic maps are also generalizations of the Paneitz operator in the context of maps, and {\textit{intrinsically}} they satisfy the following system of $4$-th order PDEs (c.f. \cite{Jiang86, Lamm05, Mos05}):
$$
\Delta^u \tau (u)=R^{\mathcal{N}}(\nabla u, \tau(u))\nabla u+\frac23\left({\rm Sc}^\mathcal{M}\tau(u)+\nabla {\rm Sc}^\mathcal{M}\nabla u\right)-2{\rm div}({\rm Ric}^\mathcal{M}( \nabla u,\cdot)),
$$
where $\Delta^u$ is the induced Laplace operator on the pullback vector bundle $u^\ast T\mathcal{N}$ over $\mathcal{M}$ or equivalently and {\textit{extrinsically}} (c.f. \cite[equation (1.2)]{LL21}, \cite[equation (9)]{LaR})
\begin{align}\label{ELP}
\Delta^2u  = &-\Delta(\nabla P^\bot\nabla u)-{\rm div}(\nabla P^\bot\Delta u)+2\nabla P^\bot\nabla(\nabla P^\bot\nabla u)\notag\\
&+2\nabla P^\bot\nabla P^\bot\Delta u-(\nabla PP^\bot-P^\bot\nabla P)\nabla\Delta u\notag\\
&+P\langle\nabla P^\bot\nabla u,{D}_u({D}_u P^\bot)\nabla u\nabla u\rangle-2{\rm div}\langle\nabla P^\bot\nabla u,\nabla P^\bot P\rangle\\
&+2\langle\nabla P^\bot\nabla u,\nabla P^\bot\nabla P\rangle+\frac23 {\rm Sc}^\mathcal{M}(\Delta u+\nabla P^\perp\nabla u) \notag\\
&+\frac23\nabla {\rm Sc}^{\mathcal{M}}\nabla u-2\nabla\text{Ric}^\mathcal{M}(\nabla u, \cdot)-2{\rm Ric}^\mathcal{M}(P(\nabla^2u), \cdot)\,,\notag
\end{align}
see Appendix \ref{appB} for a detailed derivation of this equation. Here $R^{\mathcal{N}}$ is the curvature tensor of $\mathcal{N}$, $\nabla$ and $\Delta$ are the Levi-Civita connection and Laplace–Beltrami operator on $\mathcal{M}$ respectively, $\tau(u)=(\Delta u)^T$ is the tension field,  $P(u)$ is the orthogonal projection to the tangent plane $T_u\mathcal{N}$, $D$ is the derivative with respect to the standard coordinates of $\mathbf{R}^K$ and $A(\cdot, \cdot)$ denotes the second fundamental form  of $\mathcal{N} \hookrightarrow \mathbf{R}^K$. Note that this conformal-harmonic map equation \eqref{ELP} differs from the intrinsic biharmonic map
equation by lower order terms which make \eqref{ELP} conformally invariant.

\vskip 3mm

Higher order geometric variational problems, including the study of (both extrinsic and intrinsic) biharmonic maps and polyharmonic maps, have attracted much attention in the last two decades, see e.g. \cite{DK98, CWY99, Strz03, Wang04, LR, Sche08, Stru08, GS, LW09, GN13, LY16, AY17, HJL19, LFG21} for the extrinsic case and \cite{Wang04, Ku08, Mos08, GS, LL21} for the intrinsic case. The corresponding heat flows have been studied extensively as well, as a tool to prove the existence of biharmonic maps and polyharmonic maps in a given homotopy class, see e.g. \cite{Lamm04, Lamm05, Gas06, Wang07, HHW14, HLLW16, LL21}. It should be noted that the extrinsic and intrinsic cases come in two different flavors: the intrinsic variants are considered more geometrically natural because they do not depend on the embedding of the target manifold $\mathcal{N}\hookrightarrow \mathbf{R}^K$, although they are less natural from the variational point of view due to the lack of {\textit{coercivity}} for the intrinsic energies (and thus they are considerably more difficult analytically and less studied); on the other hand, the extrinsic variants are more natural from the analytical point of view but in turn they do depend on the embedding of $\mathcal{N}\hookrightarrow \mathbf{R}^K$. Among them, the conformally invariant problems (both extrinsic and intrinsic) are considered the most geometric, see e.g., \cite{DK98, GN13}. Most of the results in the current literature concern the regularity and existence of (the weak solutions of) the critical points of the higher order geometric variational problems because they are associated to systems of higher order PDEs with {\textit{critical growth nonlinearities}}. However, the {\textit {uniqueness}} problem of these critical points have been left largely open. In \cite{LL21} P. Laurain and the first author proved a version of bienergy convexity (and thus the uniqueness) for weakly intrinsic biharmonic maps in $W^{2,2}(B_1, \mathbf{S}^n)$ with small bienergy and prescribed boundary data, where $B_1\subset \mathbf{R}^4$ is the unit 4-ball and $\mathbf{S}^n \subset \mathbf{R}^{n+1}$ is the standard unit sphere. We shall remark that such energy convexity plays an essential role in the discrete replacement min-max construction of geometric objects of interest (such as minimal spheres and free boundary minimal disks in manifolds), see e.g. Colding-Minicozzi \cite{CM08}, Lamm-Lin \cite{LL13}, Lin-Sun-Zhou \cite{LSZ20} and Laurain-Petrides \cite{LP19}.

We shall remark that the intrinsic Paneitz energy functional $\mathcal{E}(u, \mathcal{M})$ defined in \eqref{energy} on $\mathbf{S}^4$ was already used by T. Lamm in \cite{Lamm05} as a tool to prove that every weakly intrinsic biharmonic maps from $\mathbf{R}^4$ into a non-positively curved target manifold with finite bienergy has to be constant. More recently, O. Biquard and F. Madani in \cite{Biquard} used the corresponding heat flow for $\mathcal{E}(u, \mathcal{M})$ to prove an existence result for conformal-harmonic maps from a certain class of 4-manifolds into a non-positively curved manifold $\mathcal{N}$. Here the conformal-harmonic map heat flow (or, the negative $L^2$-gradient flow of $\mathcal{E}(\cdot, \mathcal{M})$) is defined as follow (when $\partial \mathcal{M} = \emptyset$):
\beq
\label{conformal-harmonic-heat-flow-1}
\left\{
\begin{aligned}
\frac{\partial u}{\partial t} + \Delta^u \tau (u) =&R^{\mathcal{N}}(\nabla u, \tau(u))\nabla u+\frac23\left({\rm Sc}^\mathcal{M}\tau(u)+\nabla {\rm Sc}^\mathcal{M}\nabla u\right)\\
&-2{\rm div}({\rm Ric}^\mathcal{M}( \nabla u,\cdot)) \quad \text{ on } \mathcal{M} \times [0, T) \\
u=&  \,u_0  \quad \text{ on } \mathcal{M}\times \{0\}\,,
\end{aligned}
\right.
\eeq
and (when $\partial \mathcal{M}\neq \emptyset$)
\beq
\label{conformal-harmonic-heat-flow-2}
\left\{
\begin{aligned}
\frac{\partial u}{\partial t} + \Delta^u \tau (u) =&R^{\mathcal{N}}(\nabla u, \tau(u))\nabla u+\frac23\left({\rm Sc}^\mathcal{M}\tau(u)+\nabla {\rm Sc}^\mathcal{M}\nabla u\right)\\
&-2{\rm div}({\rm Ric}^\mathcal{M}( \nabla u,\cdot)) \quad \text{ on } \mathcal{M} \times [0, T) \\
u=&  \,u_0  \quad \text{ on } \mathcal{M}\times \{0\}\\
u=&  \,u_0\,, \quad \partial_{\nu} u = \partial_{\nu} u_0 \quad  \text{ on } \partial \mathcal{M}\times [0, T)\,,
\end{aligned}
\right.
\eeq
where $u_0 \in C^{\infty}\cap W^{2,2}(\overline{\mathcal{M}},\mathcal{N})$. More precisely, in \cite{Biquard} O. Biquard and F. Madani proved
\begin{thm}
Let $(\mathcal{M},g)$ and $(\mathcal{N},h)$ be compact Riemannian manifolds of four dimensions and $n$ dimensions respectively. Assume that $\mathcal{N}$ has non-positive curvature, the Yamabe invariant
$$
\mu(\mathcal{M}, [g]):= \inf_{g'\in [g]}\frac{\int_{\mathcal{M}} \text{Sc}_{g'} dv_{g'}}{\left(\int_{\mathcal{M}} dv_{g'}\right)^{1/2}}
>0$$
and the conformally invariant total $Q$-curvature
$$
\kappa(\mathcal{M}, [g]):= \frac{1}{12}\int_{\mathcal{M}} \left( \text{Sc}^2_{g}-3|\text{Ric}_g|^2 \right)dv_g
$$
satisfies $\kappa+\frac{1}{6}\mu^2>0$. Then the conformal-harmonic map heat flow \ref{conformal-harmonic-heat-flow-1} exists and is smooth for all time and converges subsequently to a smooth conformal-harmonic map $u_\infty(\mathcal{M}, \mathcal{N})$ as $t\to \infty$. Consequently, there exists a conformal-harmonic map in each homotopy class in $C^\infty(\mathcal{M}, \mathcal{N})$.
\end{thm}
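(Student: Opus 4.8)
The plan is to follow the parabolic strategy of Eells--Sampson, adapted to the fourth order conformally invariant flow \eqref{conformal-harmonic-heat-flow-1} in the spirit of Lamm's treatment of the intrinsic biharmonic heat flow. The first step is short-time existence: equation \eqref{conformal-harmonic-heat-flow-1} is a quasilinear parabolic system of fourth order whose leading part is $\partial_t u + \Delta^2 u$, so standard higher order parabolic theory (after embedding $\mathcal{N}\hookrightarrow \mathbf{R}^K$ and using the extrinsic form \eqref{ELP}) yields a unique smooth solution on a maximal interval $[0,T_{\max})$. Since the flow is the negative $L^2$-gradient flow of $\mathcal{E}$, testing against $\partial_t u$ gives the energy identity $\frac{d}{dt}\mathcal{E}(u(t),\mathcal{M}) = -\int_\mathcal{M}\abs{\partial_t u}^2$, whence $\mathcal{E}(u(t),\mathcal{M})$ is non-increasing and $\int_0^{T_{\max}}\!\!\int_\mathcal{M}\abs{\partial_t u}^2 \le \mathcal{E}(u_0,\mathcal{M})<\infty$.

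The second step, where the two curvature hypotheses enter, is a coercivity estimate turning the uniform energy bound into a uniform $W^{2,2}$ bound. Writing $\abs{\tau(u)}^2 = \abs{\Delta u}^2 - \abs{A(du,du)}^2$ in the extrinsic picture, the quadratic part of $\mathcal{E}$ is exactly the Paneitz form $\int_\mathcal{M}\abs{\Delta u}^2 + \frac23 \mathrm{Sc}^\mathcal{M}\abs{du}^2 - 2\,\mathrm{Ric}^\mathcal{M}(du,du)$, which acts componentwise as the scalar Paneitz operator $P^g_4$. The assumptions $\mu(\mathcal{M},[g])>0$ and $\kappa + \frac16\mu^2>0$ are precisely a Gursky-type positivity condition ensuring $P^g_4$ is non-negative with kernel the constants and a positive spectral gap; combined with the interpolation bound $\norm{\nabla u}_2^2 \le \eps\norm{\Delta u}_2^2 + C_\eps\norm{u}_2^2$ and the boundedness of $\mathcal{N}$ (so the quartic term $\int\abs{A(du,du)}^2$ is controlled), this yields $\norm{u(t)}_{W^{2,2}(\mathcal{M})} \le C(\mathcal{E}(u_0),\mathcal{M},\mathcal{N})$ uniformly in $t$.

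The third and hardest step is to upgrade the uniform $W^{2,2}$ bound to global smooth existence, i.e.\ to show $T_{\max}=\infty$ with uniform higher regularity. Here I would prove an $\eps$-regularity lemma: there is $\eps_0>0$ so that whenever the energy of $u(t)$ on a small geodesic ball drops below $\eps_0$, all higher derivatives are bounded there in terms of the local energy. Finite-time blow-up then forces energy to concentrate at some $(x_0,T_{\max})$; rescaling parabolically and using the conformal invariance of $\mathcal{E}$ (Appendix \ref{appA}) to transplant the rescaled maps to $\mathbf{S}^4$ minus a point, one extracts a non-constant entire conformal-harmonic map $\mathbf{R}^4\to\mathcal{N}$ of finite energy. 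Since $\mathcal{N}$ has non-positive curvature, Lamm's Liouville theorem \cite{Lamm05} forces this limit to be constant, a contradiction. I expect this bubbling analysis --- establishing $\eps$-regularity for the fourth order system and reducing blow-up to the Liouville theorem --- to be the main obstacle, since the energy $\mathcal{E}$ is scale-critical and the argument must simultaneously exploit the coercivity from the curvature invariants and the non-positivity of the target curvature.

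Finally, long-time existence together with the dissipation bound $\int_0^\infty\!\!\int_\mathcal{M}\abs{\partial_t u}^2<\infty$ produces a sequence $t_k\to\infty$ with $\norm{\partial_t u(t_k)}_{L^2}\to 0$. The uniform smooth bounds let me pass to a subsequence converging in $C^\infty$ to a limit $u_\infty$, which by construction satisfies $\Delta^u\tau(u_\infty)=R^{\mathcal{N}}(\nabla u_\infty,\tau(u_\infty))\nabla u_\infty + \tfrac23(\mathrm{Sc}^\mathcal{M}\tau(u_\infty)+\nabla\mathrm{Sc}^\mathcal{M}\nabla u_\infty) - 2\,\mathrm{div}(\mathrm{Ric}^\mathcal{M}(\nabla u_\infty,\cdot))$, i.e.\ $u_\infty$ is a smooth conformal-harmonic map. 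Because $t\mapsto u(t)$ is continuous into $C^0(\mathcal{M},\mathcal{N})$ and smooth for $t>0$, it preserves the homotopy class, so $u_\infty$ is homotopic to $u_0$; as $u_0$ may be taken to be any smooth representative, every homotopy class in $C^\infty(\mathcal{M},\mathcal{N})$ contains a conformal-harmonic map.
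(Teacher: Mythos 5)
First, a point of comparison: the paper does not prove this theorem at all --- it is quoted verbatim from Biquard--Madani \cite{Biquard} as background motivation, so there is no internal proof to measure your sketch against. Judged on its own terms, your outline follows the natural Eells--Sampson/Lamm strategy, but it has a genuine gap at its load-bearing second step.

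The gap is the claimed coercivity. Writing $|\tau(u)|^2=|\Delta u|^2-|A(du,du)|^2$, the functional is
$\mathcal{E}(u)=\int_{\mathcal{M}}\bigl(|\Delta u|^2+\tfrac23\,\mathrm{Sc}^{\mathcal{M}}|du|^2-2\,\mathrm{Ric}^{\mathcal{M}}(du,du)\bigr)-\int_{\mathcal{M}}|A(du,du)|^2$, i.e.\ the componentwise Paneitz form \emph{minus} a quartic term. Gursky-type positivity of $P_4^g$ under $\mu>0$ and $\kappa+\tfrac16\mu^2>0$ therefore only yields $\mathcal{E}(u)\geq-\int_{\mathcal{M}}|A(du,du)|^2$, which is the wrong direction: the term $\int_{\mathcal{M}}|A(du,du)|^2\lesssim\int_{\mathcal{M}}|du|^4$ is scale-critical in dimension four and is \emph{not} ``controlled by the boundedness of $\mathcal{N}$'' --- it scales exactly like $\int|\Delta u|^2$ and cannot be absorbed by interpolation against $\|u\|_{L^2}$. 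This is precisely the ``lack of coercivity for the intrinsic energies'' that the introduction of the present paper flags as the central analytic difficulty, so a bound on $\mathcal{E}(u(t))$ does not give a uniform $W^{2,2}$ bound, and without that bound your third step (covering arguments, $\varepsilon$-regularity, extraction of a finite-energy bubble) does not get off the ground. Relatedly, you defer the non-positive curvature of $\mathcal{N}$ entirely to Lamm's Liouville theorem at the blow-up stage, whereas in Lamm's treatment of the biharmonic flow \cite{Lamm05} (and in \cite{Biquard}) the non-positivity is used much earlier, in Bochner-type integral identities that furnish the a priori bound on $\int_{\mathcal{M}}\bigl(|\nabla^2u|^2+|\nabla u|^4\bigr)$ and rule out concentration directly; that is the mechanism that replaces the missing coercivity. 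To repair the proof you would need to supply this curvature-weighted a priori estimate rather than a spectral-gap argument for $P_4^g$ alone.
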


\vskip 3mm

In this paper, we consider the intrinsic Paneitz energy $\mathcal{E}(u, \mathcal{M})$ for $W^{2,2}$ maps from locally conformally flat 4-manifolds with smooth boundaries into spheres and study the (quantitative) uniqueness of the critical points. The first main result of this paper is a version of energy convexity for $\mathcal{E}(u, \mathcal{M})$, more precisely, we prove

\begin{thm}\label{convexity}
Let $(\mathcal{M},g)$ be a four dimensional compact locally conformally flat Riemannian manifold with a smooth boundary $\partial \mathcal{M}$. There exist  $\epsilon_0>0, C>0$ depending only on $\mathcal{M}$ such that for any $u,v\in W^{2,2}(\mathcal{M},\mathbf{S}^n)$ with $u=v$, $\partial_\nu u=\partial_\nu v$ on $\partial \mathcal{M}$, 
\begin{equation}\label{bienergy condition}
\int_{\mathcal{M}}|\Delta u|^2dv_g\leq \epsilon_0, \quad \int_{\mathcal{M}}|\nabla v|^4dv_g\leq\epsilon_0
\end{equation}
and $u$ is a weakly conformal-harmonic map, we have
\begin{equation}\label{convexity ineq}
    \frac{1}{C}\int_{\mathcal{M}}|\Delta v-\Delta u|^2dv_g\leq \mathcal{E}(v,\mathcal{M})-\mathcal{E}(u,\mathcal{M})\,.
\end{equation}
\end{thm}

\begin{rmk}
    Our proof relies locally on the result of Laurain and the first author \cite{LL21}, which was established on the unit ball in Euclidean space. Our results could also extend to general source and target manifolds if the Laurain-Lin result is generalized to such settings.
\end{rmk}
In order to prove this theorem, we are led to show a second order Hardy inequality on smooth manifolds with smooth boundaries. Inspired by the recent work of D'Ambrosio-Dipierro \cite{DD14} (which followed the techniques introduced by Mitidieri in \cite{Mit00}), we are able to prove the following version of second order Hardy inequality on certain smooth manifolds, for details see Section \ref{Hardyineq}.

\begin{thm}\label{secondhardy-v2}
Let $(\mathcal{M},g)$ be an $n$ dimensional compact Riemannian manifold with a smooth boundary $\partial \mathcal{M}$ and $\rho \in W^{1,2}_{loc}(\mathcal{M}, \mathbf{R})$ be a nonnegative function such that $\Delta \rho \leq 0$ on $\mathcal{M}$ in the weak sense and $|\nabla \rho|\geq \delta$ a.e. on $\mathcal{M}$ for some $\delta>0$, then there exits $C:=C(\mathcal{M})>0$ such that
$$\int_{\mathcal{M}}\frac{|w|^2|\nabla \rho|^2}{\rho^4} dv_g \leq C(1+\delta^{-2})^2\int_{\mathcal{M}} |\Delta w|^2 dv_g$$
for any $w\in C^{\infty}_0(\mathcal{M}, \mathbf{R})$.
\end{thm}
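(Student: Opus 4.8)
The plan is to reduce the second-order inequality to two applications of a first-order Hardy inequality, whose availability is exactly what the hypothesis $\Delta\rho\le 0$ buys us. Since $w\in C^\infty_0(\mathcal{M})$ is supported away from $\{\rho=0\}$, every integral below is taken over a region on which $\rho$ is bounded below, so all integrations by parts are justified and the only real content is that the constant $C$ is independent of $w$ (in particular, of how close $\operatorname{supp}(w)$ gets to $\partial\mathcal{M}$).

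First I would record a weighted first-order Hardy estimate. A direct computation gives $-\operatorname{div}(\rho^{-3}\nabla\rho)=3\rho^{-4}|\nabla\rho|^2-\rho^{-3}\Delta\rho\ge 3\rho^{-4}|\nabla\rho|^2$, where the inequality uses $\Delta\rho\le 0$ and $\rho>0$. Testing this nonnegative quantity against $|w|^2$, integrating by parts (no boundary terms since $w$ is compactly supported), so that the right-hand side becomes $2\int_{\mathcal{M}}\rho^{-3}\,w\,\nabla w\cdot\nabla\rho\,dv_g$, and then applying Cauchy--Schwarz, yields
\[
3\int_{\mathcal{M}}\frac{|w|^2|\nabla\rho|^2}{\rho^4}\,dv_g\le 2\int_{\mathcal{M}}\frac{|w|\,|\nabla w|\,|\nabla\rho|}{\rho^3}\,dv_g\le 2\Big(\int_{\mathcal{M}}\frac{|w|^2|\nabla\rho|^2}{\rho^4}\Big)^{1/2}\Big(\int_{\mathcal{M}}\frac{|\nabla w|^2}{\rho^2}\Big)^{1/2}.
\]
Writing $I:=\int_{\mathcal{M}}\rho^{-4}|w|^2|\nabla\rho|^2$ and $J:=\int_{\mathcal{M}}\rho^{-2}|\nabla w|^2$, this gives $I\le \tfrac{4}{9}J$, so it suffices to prove $J\le C\int_{\mathcal{M}}|\Delta w|^2(1+|\nabla\rho|^{-2})\,dv_g$.

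To bound $J$ I would apply a first-order Hardy inequality a second time, now to the scalar function $|\nabla w|$. By Kato's inequality $|\nabla|\nabla w||\le|\nabla^2 w|$ a.e.\ and a density argument, the first-order estimate applies to $|\nabla w|$; choosing the auxiliary weight $m=|\nabla\rho|^{-2}$ in the test vector field $m\rho^{-1}\nabla\rho$ makes the left-hand side \emph{exactly} $J$ and produces $\int_{\mathcal{M}}|\nabla^2 w|^2|\nabla\rho|^{-2}$ on the right, up to error terms coming from $\nabla m$. One then converts $\int_{\mathcal{M}}|\nabla^2 w|^2|\nabla\rho|^{-2}$ into $\int_{\mathcal{M}}|\Delta w|^2|\nabla\rho|^{-2}$ by a weighted Bochner identity, using compactness of $\mathcal{M}$ (so that $|\operatorname{Ric}|$ is bounded) together with the Poincaré inequality on $C^\infty_0(\mathcal{M})$, which yields $\int_{\mathcal{M}}|\nabla w|^2\le C\int_{\mathcal{M}}|\Delta w|^2$ and thereby generates the unweighted summand $\int_{\mathcal{M}}|\Delta w|^2$ on the right-hand side. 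Combining $I\le\tfrac49 J$ with the resulting bound on $J$ finishes the proof.

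The main obstacle is this second step: a first-order Hardy inequality naturally controls the geometrically weighted quantity $\int_{\mathcal{M}}\rho^{-2}|\nabla w|^2|\nabla\rho|^2$, whereas we need $J=\int_{\mathcal{M}}\rho^{-2}|\nabla w|^2$. These differ precisely by the factor $|\nabla\rho|^2$, and on the set where $|\nabla\rho|$ is small the naive Hardy-to-Rellich iteration loses control (a reflection of the borderline nature of the Rellich inequality in dimension four, where the analogous flat estimate with $\rho=|x|$ fails because $\Delta|x|>0$). It is exactly here that the weight $(1+|\nabla\rho|^{-2})$ and the term $\int_{\mathcal{M}}|\Delta w|^2|\nabla\rho|^{-2}$ are forced upon us, in order to absorb the degeneracy. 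For a genuine distance function $|\nabla\rho|\equiv1$ all weight-derivative error terms vanish, $J$ coincides with the quantity produced by Hardy, and the argument collapses to the classical iteration; the technical heart of the general case is controlling the terms involving $\nabla|\nabla\rho|$ and the region where $|\nabla\rho|$ degenerates, which I expect to require the full strength of the hypothesis $|\nabla\rho|\ne0$ a.e.\ and a careful weighted integration by parts in the spirit of Mitidieri and D'Ambrosio--Dipierro.
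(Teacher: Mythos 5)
Your opening move coincides with the paper's: both start from $-\operatorname{div}(\rho^{-3}\nabla\rho)\ge 3\rho^{-4}|\nabla\rho|^{2}$ (the paper writes this as $\Delta(\tfrac{1}{2\rho^{2}})\ge 3|\nabla\rho|^{2}/\rho^{4}$), test against $|w|^{2}$ and integrate by parts. The difference is that you stop after one integration by parts and apply Cauchy--Schwarz at once, obtaining $I\le\tfrac{4}{9}J$ with $I=\int\rho^{-4}|w|^{2}|\nabla\rho|^{2}$ and $J=\int\rho^{-2}|\nabla w|^{2}$; that reduction is correct. The paper integrates by parts a second time, writing $-2\int w\nabla w\cdot\nabla(\tfrac{1}{2\rho^{2}})=\int\rho^{-2}\bigl(|\nabla w|^{2}+w\Delta w\bigr)$, and estimates the cross term by
\[
\Bigl|\int_{\mathcal{M}}\frac{w\,\Delta w}{\rho^{2}}\,dv_g\Bigr|\;\le\;\Bigl(\int_{\mathcal{M}}\frac{|w|^{2}|\nabla\rho|^{2}}{\rho^{4}}\,dv_g\Bigr)^{1/2}\Bigl(\int_{\mathcal{M}}\frac{|\Delta w|^{2}}{|\nabla\rho|^{2}}\,dv_g\Bigr)^{1/2},
\]
which is exactly where the weight $|\nabla\rho|^{-2}$ in the conclusion enters. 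After absorbing the factor $I^{1/2}$, all that remains is the \emph{unweighted} bound $J\le C\int_{\mathcal{M}}|\Delta w|^{2}$, obtained from Kato's inequality, the first-order Hardy inequality applied to $|\nabla w|$, and Reilly/Bochner plus Poincar\'e to pass from $\int|\nabla^{2}w|^{2}$ to $\int|\Delta w|^{2}$.

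The genuine gap is your second step. Because you push the entire burden onto $J$, you must prove $J\le C\int|\Delta w|^{2}(1+|\nabla\rho|^{-2})$, and the mechanism you propose---a Hardy argument with the vector field $|\nabla\rho|^{-2}\rho^{-1}\nabla\rho$---does not close. Its divergence contains $\rho^{-1}\nabla\rho\cdot\nabla(|\nabla\rho|^{-2})$, which involves second derivatives of $\rho$: under the hypotheses ($\rho\in W^{1,2}_{loc}$ with only a distributional sign condition on $\Delta\rho$) this term is not even defined, and for smooth $\rho$ it has no sign and nothing in the assumptions lets you absorb it. You flag this yourself (``up to error terms coming from $\nabla m$'', ``I expect to require \dots a careful weighted integration by parts'') but never carry it out, so the crux of the proof is missing. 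You are right that the first-order Hardy inequality naturally controls $\int\rho^{-2}|\nabla w|^{2}|\nabla\rho|^{2}$ rather than $J$ itself---the paper glosses over this, which is harmless only because in its applications $|\nabla\rho|$ is bounded below on the support of $w$---but the structural fix is not a modified vector field: it is the second integration by parts, which pairs $\Delta w$ against $w/\rho^{2}$ so that the $|\nabla\rho|^{-2}$ weight is produced by Cauchy--Schwarz against $I$, leaving only an unweighted estimate on $J$ in which no derivative of $|\nabla\rho|$ ever appears.
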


In fact, based on Theorem \ref{secondhardy-v2} we are able to prove a general version of second order Hardy inequality that is valid on \textit{any} smooth manifold.

\begin{thm}\label{secondhardy}
Let $(\mathcal{M},g)$ be an $n$ dimensional compact Riemannian manifold with a smooth boundary $\partial \mathcal{M}$ and $\rho$ be the distance function to the boundary. Then there exits $C:=C(\mathcal{M})>0$ such that
$$\int_{\mathcal{M}}\frac{|w|^2}{\rho^4} dv_g \leq C\int_{\mathcal{M}} |\Delta w|^2 dv_g$$
for any $w\in C^{\infty}_0(\mathcal{M}, \mathbf{R})$.
\end{thm}
\begin{proof}
Let $\psi>0$ be the eigenfunction associated to the first eigenvalue $\lambda_1>0$ of $-\Delta_g$ on $\mathcal{M}$ with $\psi|_{\partial \mathcal{M}}=0$. Then we have $\psi\in C^\infty(\overline{\mathcal{M}})$ and $|\nabla \psi|> 0$ on $\partial {\mathcal{M}}$ by the Hopf boundary point Lemma (see e.g. \cite[Theorem E.4]{MR2365237}), noting that $\frac{\partial \psi}{\partial \nu}<0$ and the tangential derivative of $\psi$ vanishes on the boundary. Now as in the proof of \cite[Theorem 6.2]{DD14}, for fixed $\gamma>0$, define $$\mathcal{M}_\gamma = \{x\in \mathcal{M}: \rho(x)\geq \gamma\}$$ and $$\mathcal{M}^\gamma = \{x\in \mathcal{M}: \rho(x) < \gamma\}.$$ Then there exist constants $\epsilon, L, \delta>0$ depending on $\mathcal{M}$ and a smooth cut-off function $\chi \in C^{\infty}(\overline{\mathcal{M}}, \mathbf{R})$ such that
$$
\psi(x)\leq L\rho(x), \quad x\in \mathcal{M} \quad\text{and}\quad |\nabla \psi|(x)\geq \delta,\quad x\in \mathcal{M}^{2\epsilon}\,,
$$
$\chi =1$ on $\overline{\mathcal{M}^\epsilon}$, $\chi=0$ on $M \setminus  \mathcal{M}^{2\epsilon}$, and $|\nabla^i \chi | \leq C\epsilon^{-i}, i=1,2$ on $\mathcal{M}^{2\epsilon}  \setminus \mathcal{M}^{\epsilon}$. For any $w\in C^{\infty}_0(\mathcal{M}, \mathbf{R})$, denote $$\tilde{w} = w\cdot \chi \in C^{\infty}_0(\mathcal{M}^{2\epsilon},\mathbf{R}).$$ Then by Theorem \ref{secondhardy-v2} (replacing $\mathcal{M}$ by  $\mathcal{M}^{2\epsilon}$, $w$ by $\tilde{w}$ and $\rho$ by $\psi$) we get
\begin{align}\label{temp1}
\int_{\mathcal{M^\epsilon}} \frac{|w|^2}{\rho^4} dv_g &\leq \frac{L^4}{\delta^2}\int_{\mathcal{M}^{2\epsilon}} \frac{|\tilde{w}|^2|\nabla \psi|^2}{\psi^4}dv_g \notag\\
&\leq \frac{C(\mathcal{M})(1+\delta^{-2})^2L^4}{\delta^2}\int_{\mathcal{M}^{2\epsilon}}|\Delta \tilde{w}|^2 dv_g
 \notag\\
& \leq \frac{C(\mathcal{M},\epsilon)(1+\delta^{-2})^2 L^4}{\delta^2}\int_{\mathcal{M}}|\Delta w|^2dv_g\,.
\end{align}
Now let $m_\epsilon = \min_{\mathcal{M}_\epsilon} \psi>0, D = \max_{\mathcal{M}} \psi>0$, then by \cite[Theorem 6.3]{DD14} (with $p=2, s=\frac{1}{2}$) we have
\begin{align}\label{temp2}
\int_{\mathcal{M_\epsilon}} \frac{|w|^2}{\rho^4} dv_g &\leq m_\epsilon^{-\frac{1}{2}}\epsilon^{-4}\int_{\mathcal{M}} |w|^2 \psi^{\frac{1}{2}} dv_g\notag \\
&\leq 8m_\epsilon^{-\frac{1}{2}}\epsilon^{-4}\lambda_1^{-1} D^{\frac{1}{2}}\int_{\mathcal{M}} |\nabla w|^2 dv_g\notag
 \\
& \leq C(\mathcal{M},\epsilon, m_\epsilon, \lambda_1, D) \int_{\mathcal{M}}|\Delta w|^2dv_g\,.
\end{align}
Combining \eqref{temp1} and \eqref{temp2} completes the proof.
\end{proof}

As an immediate corollary of Theorem \ref{convexity} we get the following uniqueness result for weakly conformal-harmonic maps from a locally conformally flat $4$-manifold $\mathcal{M}$ into spheres.

\begin{cor}
Let $(\mathcal{M},g)$ be a four dimensional compact locally conformally flat Riemannian manifold with a smooth boundary $\partial \mathcal{M}$. Then there exists $\epsilon_0>0$ depending only on $\mathcal{M}$ such that for any weakly conformal-harmonic maps $u,v\in W^{2,2}(\mathcal{M},\mathbf{S}^n)$ with $u=v$,  $\partial_\nu u=\partial_\nu v$ on $\partial \mathcal{M}$ and
\begin{equation*}
\int_{\mathcal{M}}|\Delta u|^2 dv_g\leq \epsilon_0, \quad \int_{\mathcal{M}}|\Delta v|^2 dv_g\leq \epsilon_0,
\end{equation*}
we have $u\equiv v$ on $\mathcal{M}$.
\end{cor}

\subsection*{Acknowledgement} We would like to thank the referees for their valuable comments, which have significantly improved the presentation of the results in this paper. The first author acknowledges partial support from a COR research funding at UC Santa Cruz. The second author would like to thank the support by the National Natural Science Foundation of China (Grant No. 12201440) and the Fundamental Research Funds for the Central Universities (Grant No. YJ2021136).

\section{Preliminary}
In this section, we will fix some notations and recall a technical theorem ($\epsilon$-regularity) that will be used later. Throughout this section, $(\mathcal{M},g)$ denotes a smooth $4$-dimensional compact Riemannian manifold with a smooth boundary $\partial \mathcal{M}$ and $(\mathcal{N},h)$ is an $n$-dimensional smooth closed Riemannian manifold which can be embedded into $\mathbf{R}^K$. As mentioned in the introduction, a {\textit{weakly conformal-harmonic map}} $u$ from $\mathcal{M}$ into $\mathcal{N}\hookrightarrow \R^k$ is a map in $W^{2,2}(\mathcal{M}, \R^k)$ that is a critical point of the conformally invariant energy $\mathcal{E}(\cdot, \mathcal{M})$ defined in \eqref{energy} and takes values almost everywhere in $\mathcal{N}$. 

Note that the dimension $4$ is \textit{critical} for the analysis of weakly conformal-harmonic maps (e.g. a $W^{2,2}$ map falls in $L^p$ for any $p<\infty$ but barely fails to be continuous in dimension $4$). Now let $\Pi : \mathcal{N}_\delta \rightarrow \mathcal{N}$ be the nearest point projection map, which is well defined and smooth for $\delta>0$ small enough. Here $\mathcal{N}_\delta=\{y\in\R^k\,\vert\, \text{dist}\,(y, \mathcal{N})\leq \delta\}$. For $y\in \mathcal{N}$, let 
$$P(y)\equiv D \Pi (y):\R^k \rightarrow T_y \mathcal{N}$$
be the orthogonal projection onto the tangent plane $T_y \mathcal{N}$, and 
$$P^\bot(y)\equiv \text{Id}-D\Pi (y):\R^k \rightarrow (T_y \mathcal{N})^\bot\,,$$
where $D$ is the derivative with respect to the standard coordinates of $\mathbf{R}^K$. In the following, we will write $P$ (resp. $P^\bot$) instead of  $P(y)$ (resp. $P^\bot(y)$) and we will identify these linear transformations with their matrix representations in $M_n$. We also note that these projections are in $W^{2,2}(\mathcal{M},M_n)$ as soon as $u$ is in  $W^{2,2}(\mathcal{M},\mathcal{N})$ . Finally, note that the second fundamental form $A(\,\cdot\,)(\, \cdot\, , \,\cdot\,)$ of $\mathcal{N}\subset\R^k$ is defined by
$$A(y)(Y,Z):=D_Y P^\bot (y) (Z), \quad \forall \,y \in \mathcal{N} \quad \text{and} \quad Y , Z \in T_y \mathcal{N}.$$

We know that $u=(u^1, ..., u^k)\in W^{2,2}(\mathcal{M},\mathcal{N})$ is an \textit{intrinsic bi-harmonic map} if it satisfies the fourth order PDE (see \cite{Wang04-2} and \cite{LaR} for details)
\begin{align}\label{ibe}
\Delta^2u  = &-\Delta(\nabla P^\bot\nabla u)-{\rm div}(\nabla P^\bot\Delta u)+2\nabla P^\bot\nabla(\nabla P^\bot\nabla u)\notag\\
&+2\nabla P^\bot\nabla P^\bot\Delta u-(\nabla PP^\bot-P^\bot\nabla P)\nabla\Delta u\notag\\
&+P\langle\nabla P^\bot\nabla u,{D}_u({D}_u P^\bot)\nabla u\nabla u\rangle-2{\rm div}\langle\nabla P^\bot\nabla u,\nabla P^\bot P\rangle\\
&+2\langle\nabla P^\bot\nabla u,\nabla P^\bot\nabla P\rangle\,.\notag
\end{align}
Here $D_u P^\bot = D_{y} P^\bot (y)|_{y=u}$. Note that
\begin{equation}
\nabla P^\perp=D_u P^\perp\nabla u\,,
\end{equation}
and the following two terms in \eqref{ibe} are equivalent to:
\begin{equation}
\nabla P^\perp P = {D}_u P^\perp\nabla u P = A(u)(\nabla u,P)
\end{equation}
and
\begin{equation}
\nabla P^\perp \nabla P={D}_u P^\perp\nabla u \nabla P=A(u)(\nabla u,\nabla P).
\end{equation}

When $\mathcal{N}=\mathbf{S}^n$, we have (note that $\Delta u=\tau(u)-A(u)(\nabla u,\nabla u)$).
\begin{equation}\label{decomposition laplace}
P^\perp(\Delta u)=-\nabla P^\perp\nabla u=-u|\nabla u|^2\,,
\end{equation}
and therefore
\begin{equation}\label{2nd ff}
A(u)(\nabla u,\nabla u)=\nabla P^\perp\nabla u=u|\nabla u|^2.
\end{equation}
In particular, when $\mathcal{N} = \mathbf{S}^n$, the intrinsic bi-harmonic map equation can be rewritten as (see e.g. Lamm-Rivi\`{e}re \cite{LR})
\begin{equation}\label{ibe02}
\Delta^2 u \,=\, \Delta (V\cdot \nabla u) + \text{div }(w \nabla u) + W\cdot \nabla u\,,
\end{equation}
where
\beq
\label{ibe03}
\left\{
\begin{aligned}
V^{ij} &=  u^i\nabla u^j - u^j\nabla u^i\\
w^{ij}&= \text{div}\left(V^{ij}\right) \\
W^{ij}&= \nabla w^{ij}+ 2 \left[ \Delta u^i \nabla u^j - \Delta u^j \nabla u^i + |\nabla u|^2(u^i\nabla u^j - u^j\nabla u^i) \right]\,.
\end{aligned}
\right.
\eeq

\begin{rmk}
In local coordinates, the terms in \eqref{ibe} read as
\begin{align*}
&P\left(\langle  \nabla P^\bot  \nabla u , D_u(D_u P^\bot)\nabla u  \nabla u \rangle \right)\\ &=\sum_{\alpha, \beta,\gamma, i, j, k, m} P_{lk} \nabla_\alpha (P^\bot)_{ij} \nabla_{\alpha} u^j  D_{u^k} D_{u^\beta}(P^\bot)_{im} \nabla_\gamma u^\beta \nabla_\gamma u^m;
\end{align*}
\begin{align*}
 \langle \nabla P^\bot  \nabla u,  \nabla P^\bot  \nabla P \rangle\,&=\, \sum_{\alpha, \beta, i, j, k} \nabla_\alpha (P^\bot)_{ij} \nabla_{\alpha} u^j  \nabla_\beta (P^\bot)_{ik}    \nabla_\beta P_{kl}\,;\\
\text{div} \,\langle \nabla P^\bot  \nabla u,  \nabla P^\bot  P \rangle \,&=\,  \sum_{\alpha, \beta, i, j, k} \nabla_\beta \left [\nabla_\alpha (P^\bot)_{ij} \nabla_{\alpha} u^j  \nabla_\beta (P^\bot)_{ik}    P_{kl}\right]\,,
\end{align*}
where $\nabla_\alpha:= \nabla_{e_\alpha}$ and $\{e_\alpha\}_{\alpha=1,\cdots,4}$ is a local orthonormal frame on $\mathcal{M}$.
\end{rmk}

To end this section, let us recall the following version of $\varepsilon$-regularity for approximate intrinsic and extrinsic bi-harmonic maps into spheres, which will be useful later. Throughout the rest of this paper, $\bar\nabla$ and $\bar\Delta$ will denote the connection and Laplacian with respect to the Euclidean metric.

\begin{thm}(\cite[Theorem A.4]{LL21}, c.f. \cite{LR, LaR})\label{ereg}
Let $B_1\subset \mathbf{R}^4$ be the unit 4-ball. There exist $\eps>0$, $0<\delta<1$, $\alpha>0$ and $C>0$ independent of $u$ such that if $u\in W^{2,2}(B_1, \mathbf{S}^n)$ is a solution of 
\begin{equation}
\label{nf2}
\bar\Delta^2 u =\bar \Delta (V \bar\nabla u) + \text{div}( w \bar\nabla u) + \bar\nabla \omega \bar\nabla u + F\bar\nabla u ,
\end{equation}
where $V \in W^{1,2}(B_1, M_{n+1} \otimes \Lambda^{1} \R^4)$, $w\in L^2(B_1, M_{n+1})$, $\omega \in L^2(B_1, \mathrm{s}o_{n+1})$ and $F \in L^2\cdot W^{1,2}(B_1,  M_{n+1} \otimes \Lambda^{1} \R^4)$, which satisfy 
\beq
\label{cc}
\begin{split}
\vert V\vert  &\leq C\vert \bar\nabla u\vert\,,\\
\vert F \vert  &\leq C \vert\bar\nabla u \vert\left(\vert \bar\nabla^2 u\vert + \vert \bar\nabla u\vert^2 \right)\,,\\
\vert w \vert + \vert \omega \vert  &\leq C\left( \vert \bar\nabla^2 u\vert + \vert \bar\nabla u\vert^2 \right ) 
\end{split}
\eeq
almost everywhere (where $C>0$ is a constant depending only on $\mathcal{N}$) and
\begin{equation}\label{smallbien01}
\Vert \bar\Delta u\Vert_{L^2(B_1)} \leq \eps,
\end{equation}
then we have $u\in W_{loc}^{3, 4/3} (B_1, \R^{n+1})$ and
\begin{equation*}
\Vert \bar\nabla^3 u \Vert_{L^{\frac{4}{3}}(B(p, \rho))} +\Vert \bar\nabla^2 u \Vert_{L^2(B(p, \rho))} +\Vert \bar\nabla u \Vert_{L^4(B(p, \rho))}  \leq C \rho^\alpha \Vert \bar\Delta u\Vert_{L^2(B_1)}
\end{equation*}
for all $p\in B_{\frac{1}{4}}$ and $0\leq  \rho \leq \delta$. Moreover, $u\in W^{3,\infty} (B_{\frac{1}{16}}, \R^{n+1})$ and for $l = 1, 2, 3$ we have
\beq
\label{Fereg} \vert \bar\nabla^l u\vert(0) \leq C_{l}\Vert\bar \Delta u\Vert_{L^2(B_1)}
\eeq
for some constant $C_l>0$. In particular, by rescaling we have for $x\in B_1$ and $l=1,2,3$:
\beq
\label{PTWSest} 
\vert \bar\nabla^l u\vert(x) \leq C_{l}(1-|x|)^{-l} \Vert \bar\Delta u\Vert_{L^2(B_1)}\,.
\eeq
\end{thm}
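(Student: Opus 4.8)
The plan is to follow the gauge-theoretic strategy for fourth order critical systems introduced by Lamm--Rivi\`ere (the references \cite{LR,LaR} cited in the statement), which rewrites \eqref{nf2} as a \emph{conservation law} and then extracts an $\eps$-regularity decay. The starting observation is that dimension four is critical, so $u\in W^{2,2}(B_1,\mathbf{S}^n)$ yields $\bar\nabla u\in L^4$ and $\bar\nabla^2u\in L^2$; the structural bounds \eqref{cc} then put $V\in W^{1,2}\cap L^4$, $w,\omega\in L^2$ and $F$ in the product space $L^2\cdot W^{1,2}$, and the smallness \eqref{smallbien01} forces all of these to be small in their respective norms. The antisymmetry $\omega\in\mathrm{so}_{n+1}$ is the essential algebraic input: it is the fourth order analogue of Rivi\`ere's antisymmetric potential for second order systems and is precisely what makes a gauge available.

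First I would construct, by a fixed point argument exploiting the smallness of $\|\bar\Delta u\|_{L^2(B_1)}$, a pair of gauge fields $A$ close to the identity in $W^{2,2}\cap L^\infty$ and $B\in W^{1,4/3}$ solving the elliptic system encoding the compatibility of $A$ with $V,w,\omega$. The purpose of this gauge is to recast \eqref{nf2} in divergence (conservation) form $\mathrm{div}(\cdot)=0$, absorbing the troublesome term $\bar\nabla\omega\,\bar\nabla u$ into the antisymmetric part; this is exactly the Lamm--Rivi\`ere conservation law, and the smallness guarantees both the solvability of the gauge system and the invertibility of $A$.

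Next I would convert the conservation law into a Morrey decay estimate for $\bar\Delta u$. On each small ball $B(p,r)$ I would compare $u$ with the biharmonic function $h$ sharing its Cauchy data on $\partial B(p,r)$: interior estimates for $\bar\Delta^2h=0$ give the clean decay $\int_{B(p,\theta r)}|\bar\Delta h|^2\leq C\theta^4\int_{B(p,r)}|\bar\Delta h|^2$, while the error $u-h$ is controlled through the right-hand side of \eqref{nf2} by compensation (Wente-type and Coifman--Lions--Meyer--Semmes Hardy space estimates for the Jacobian and div-curl structures), the nonlinear contribution being absorbed by smallness. Iterating over dyadic scales produces $\int_{B(p,\rho)}|\bar\Delta u|^2\leq C\rho^{2\alpha}\|\bar\Delta u\|_{L^2(B_1)}^2$ for some $\alpha\in(0,1)$, which is the quantitative heart of the theorem. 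Adams' Riesz-potential estimates in Morrey spaces then upgrade this to Morrey decay of $\bar\nabla u$ and $\bar\nabla^2u$ and to the local bound $u\in W^{3,4/3}_{loc}$; once a little extra integrability is gained the nonlinearity becomes subcritical, and Calder\'on--Zygmund theory together with the smoothness of the target $\mathbf{S}^n$ bootstrap $u$ to $W^{3,\infty}(B_{1/16})$ and to the pointwise bound \eqref{Fereg}. The scale-invariant estimate \eqref{PTWSest} then follows by applying \eqref{Fereg} on $B(x,(1-|x|)/2)$ and rescaling to unit size, using that in dimension four the map $u(\cdot)\mapsto u(\lambda\,\cdot)$ leaves $\int|\bar\Delta u|^2$ invariant.

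The main obstacle is the first two steps: building the gauge and deriving the conservation law in the borderline case where $\bar\Delta u$ lies only in $L^2$. Here one must use the antisymmetry of $\omega$ and the precise product structure of $F$ in \eqref{cc} to place the various nonlinear terms in Hardy spaces rather than merely in $L^1$, so that their Riesz potentials gain the improvement needed to close the decay iteration. Without this compensated-compactness structure the critical terms would be only integrable and no decay---hence no regularity---could be extracted; carrying out this fourth order compensation rigorously is the technical core that the cited references supply.
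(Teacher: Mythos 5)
The paper does not prove this theorem: it is imported verbatim from \cite[Theorem A.4]{laurain2020energy} and used as a black box, so there is no in-paper proof to compare against. Your sketch follows the standard Lamm--Rivi\`ere gauge/conservation-law strategy (gauge construction by fixed point under the smallness hypothesis, Morrey decay via comparison with biharmonic functions plus compensated-compactness estimates, Adams--Riesz upgrading, bootstrap to $W^{3,\infty}$, and the scale-invariance of $\int|\bar\Delta u|^2$ in dimension four for \eqref{PTWSest}), which is exactly how the cited source establishes the result, and you correctly identify the borderline gauge construction as the technical core; the outline is sound at this level of detail.
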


\section{Second-order Hardy inequality on manifolds}\label{Hardyineq}

The first order Hardy inequality on Euclidean domains is well-known. Let's recall the second order Hardy inequality on Euclidean domains (see e.g. \cite{ER99, KM97}).

\begin{thm}(\cite[Theorem 2]{ER99})
Let $\Omega \subset \mathbf{R}^n$ be a bounded Lipschitz domain. There exists a constant $C>0$ depending only on $\Omega$ such that if $w\in W^{2,2}_0(\Omega,\mathbf{R}^K)$, then we have
\begin{equation}
  \int_{\Omega} |w|^2(\text{dist}(x, \partial \Omega))^{-4}dx\leq C\int_{\Omega}|\bar\Delta w|^2dx.
\end{equation}
\end{thm}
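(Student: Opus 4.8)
The plan is to reduce the second-order inequality to two successive applications of the first-order Hardy inequality, exploiting two structural facts about $w\in W^{2,2}_0(\Omega)$. The first is the pointwise-free identity
\[
\int_\Omega |\nabla^2 w|^2\,dx=\sum_{i,j}\int_\Omega (\partial_i\partial_j w)^2\,dx=\sum_{i,j}\int_\Omega \partial_{ii}w\,\partial_{jj}w\,dx=\int_\Omega (\Delta w)^2\,dx,
\]
which holds for every $w\in W^{2,2}_0(\Omega)$: for $w\in C_0^\infty(\Omega)$ one extends by zero to $\mathbf{R}^n$ and integrates by parts twice, with no boundary contributions, and the general case follows by density. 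The second fact is that $\nabla w$ again lies in $W^{1,2}_0(\Omega)$ componentwise, since $W^{2,2}_0(\Omega)$ is the $W^{2,2}$-closure of $C_0^\infty(\Omega)$ and $\partial_i w\in C_0^\infty(\Omega)$ for $w\in C_0^\infty(\Omega)$. This second fact is exactly what lets the argument iterate past the first derivative, even though a general $W^{2,2}$ function has nonvanishing gradient trace.

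The core of the proof is then the chain
\[
\int_\Omega \frac{|w|^2}{\rho^4}\,dx \;\le\; C_1\int_\Omega \frac{|\nabla w|^2}{\rho^2}\,dx \;\le\; C_1C_2\int_\Omega |\nabla^2 w|^2\,dx \;=\; C_1C_2\int_\Omega |\Delta w|^2\,dx,
\]
where $\rho:=\mathrm{dist}(\cdot,\partial\Omega)$. The first inequality is the weighted first-order Hardy inequality applied to $w\in W^{1,2}_0(\Omega)$ with the weight pair $(\rho^{-4},\rho^{-2})$; the second is obtained by applying the unweighted first-order Hardy inequality to each component $\partial_i w\in W^{1,2}_0(\Omega)$ and summing over $i$; the final equality is the identity above. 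Both Hardy inequalities reduce, after introducing a finite atlas of Lipschitz boundary charts, a subordinate partition of unity, and Fermi-type collar coordinates in which $\rho$ is comparable to the normal coordinate $t$, to the one-dimensional weighted Hardy inequality $\int_0^\delta t^{s}|f|^2\,dt\le \tfrac{4}{(s+1)^2}\int_0^\delta t^{s+2}|f'|^2\,dt$ for $f(0)=0$ and $s<-1$, used with $s=-4$ and $s=-2$ respectively (finiteness of the right-hand side of the $s=-4$ case, guaranteed a posteriori by the $s=-2$ step, forces $w$ to decay fast enough for the boundary terms to vanish). In the interior, where $\rho\ge\gamma>0$, the weights are bounded and the relevant integrals are absorbed harmlessly.

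I expect the main obstacle to lie in establishing these two first-order Hardy inequalities \emph{uniformly on a general bounded Lipschitz domain}, where $\rho$ is merely Lipschitz and $\Delta\rho$ carries no definite sign. Consequently the clean divergence/vector-field argument used for the smooth manifold case in Theorem~\ref{secondhardy-v2} (which relied on $\Delta\rho\le 0$) is unavailable, and its robust replacement is precisely the boundary-flattening reduction to the one-dimensional weighted Hardy inequality. The delicate points there are verifying that $\rho$ stays comparable to the normal coordinate across Lipschitz charts and patching the local weighted estimates without degrading the weights near the boundary; alternatively, one may simply invoke the known weighted first-order Hardy inequalities valid on Lipschitz domains and treat the present statement as their formal iteration combined with the $W^{2,2}_0$ Laplacian identity.
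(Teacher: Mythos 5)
Your proposal is essentially correct, but note first that the paper does not prove this statement at all: it is quoted from \cite{ER99} as a known Euclidean fact, and the only second-order Hardy inequality actually proved in the paper is the manifold version, Theorem \ref{secondhardy-v2}, whose argument is genuinely different from yours. There the proof is a Mitidieri-type divergence argument: from $\Delta\rho\le 0$ one gets $\Delta\bigl(\tfrac{1}{2\rho^2}\bigr)\ge 3|\nabla\rho|^2\rho^{-4}$ in the weak sense, tests this against $|w|^2$, and closes the estimate with Cauchy--Schwarz, the first-order Hardy inequality of D'Ambrosio--Dipierro, and a Reilly/Bochner-type bound $\int|\nabla^2w|^2\le C\int|\Delta w|^2$. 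That route is unavailable on a general bounded Lipschitz domain for exactly the reason you identify ($\Delta\rho$ carries no sign), and your replacement --- the chain $\int|w|^2\rho^{-4}\le C\int|\nabla w|^2\rho^{-2}\le C\int|\nabla^2w|^2=\int|\Delta w|^2$, iterating two first-order weighted Hardy inequalities and using the exact flat identity on $W^{2,2}_0$ --- is the standard and correct way to obtain the Euclidean statement; the weighted inequalities you invoke (power weight $\rho^{\beta}$ with $\beta=-2<p-1=1$, hence subcritical) are classical for bounded Lipschitz domains, and your boundary-flattening reduction to the one-dimensional Hardy inequality is the usual proof of them. What each approach buys: yours tolerates merely Lipschitz boundaries with no condition on $\Delta\rho$, but leans on $\|\nabla^2w\|_{L^2}=\|\Delta w\|_{L^2}$, which fails on curved manifolds; the paper's divergence argument survives the passage to manifolds but must assume $\Delta\rho\le 0$ (or substitute a first eigenfunction) to do so. One small point to tighten: since you work with $w\in C_0^\infty(\Omega)$ (vanishing identically near $\partial\Omega$) and pass to $W^{2,2}_0$ only at the end by density and Fatou, all intermediate weighted integrals are trivially finite and the ``a posteriori finiteness'' remark about the boundary terms is not needed.
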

Recently, in \cite{DD14} D'Ambrosio and Dipierro extended the first order Hardy inequality to the Riemannian manifold setting (c.f. \cite{Car97, KO09}), more precisely, among other things they proved:

\begin{thm}{\cite[Theorem 2.1]{DD14}}\label{firtHardy}
Let $(\mathcal{M},g)$ be an $n$ dimensional compact Riemannian manifold with a smooth boundary $\partial \mathcal{M}$. Let $\rho \in W^{1,2}_{loc}(\mathcal{M},\mathbf{R})$ be a nonnegative function such that $\Delta \rho \leq 0$ on $\mathcal{M}$ in the weak sense, then we have $\frac{|\nabla \rho|^2}{|\rho|^2} \in L^1_{loc}(\mathcal{M}, \mathbf{R})$ and
$$
\int_{\mathcal{M}} \frac{|w|^2}{\rho^2}|\nabla \rho|^2 dv_g \leq 4\int_{\mathcal{M}} |\nabla w|^2dv_g 
$$
for any $w\in C^{\infty}_0(\mathcal{M},\mathbf{R})$.
\end{thm}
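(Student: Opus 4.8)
The plan is to prove this inequality by Mitidieri's vector-field method (as adapted to the manifold setting in \cite{DD14}), which amounts to testing the weak superharmonicity of $\rho$ against a well-chosen nonnegative multiplier. The starting point is the pointwise identity $\frac{|\nabla\rho|^2}{\rho^2} = -\nabla\big(\frac{1}{\rho}\big)\cdot\nabla\rho$, which rewrites the left-hand side as $\int_{\mathcal{M}} w^2\frac{|\nabla\rho|^2}{\rho^2}\,dv_g = -\int_{\mathcal{M}} w^2\,\nabla\big(\frac{1}{\rho}\big)\cdot\nabla\rho\,dv_g$. I would then integrate by parts to move the derivative off of $1/\rho$; since $w$ is compactly supported there is no boundary contribution, and expanding $\nabla\big(\frac{w^2}{\rho}\big) = \frac{2w\nabla w}{\rho} - \frac{w^2\nabla\rho}{\rho^2}$ produces the identity $\int_{\mathcal{M}} w^2\frac{|\nabla\rho|^2}{\rho^2}\,dv_g = \int_{\mathcal{M}} \frac{2w\,\nabla w\cdot\nabla\rho}{\rho}\,dv_g - \int_{\mathcal{M}} \nabla\big(\frac{w^2}{\rho}\big)\cdot\nabla\rho\,dv_g$.

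The second integral is precisely $\langle\Delta\rho,\,w^2/\rho\rangle$ in the distributional sense, and this is where the hypothesis enters: because $\rho\geq 0$ the multiplier $w^2/\rho$ is nonnegative, so the assumption $\Delta\rho\leq 0$ in the weak sense forces this term to be $\leq 0$ and it may simply be discarded. The first integral is controlled by Cauchy--Schwarz, $\int_{\mathcal{M}}\frac{2w\,\nabla w\cdot\nabla\rho}{\rho}\,dv_g \leq 2\big(\int_{\mathcal{M}} w^2\frac{|\nabla\rho|^2}{\rho^2}\,dv_g\big)^{1/2}\big(\int_{\mathcal{M}}|\nabla w|^2\,dv_g\big)^{1/2}$. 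Writing $I$ for the left-hand side of the desired inequality, these two steps give $I \leq 2\,I^{1/2}\,\|\nabla w\|_{L^2(\mathcal{M})}$; dividing through by $I^{1/2}$ yields $I^{1/2}\leq 2\|\nabla w\|_{L^2(\mathcal{M})}$, which is exactly $I\leq 4\int_{\mathcal{M}}|\nabla w|^2\,dv_g$ with the sharp constant $4$.

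The main obstacle is rigor rather than algebra: the multiplier $w^2/\rho$ need not be an admissible test function, since $w^2/\rho$ and $|\nabla\rho|^2/\rho^2$ may blow up on the zero set of $\rho$ inside $\mathrm{supp}\,w$, and a priori one does not even know that $I$ is finite. To circumvent this I would carry out the entire computation with $\rho$ replaced by $\rho_\eps := \rho+\eps$ for $\eps>0$. Because $\nabla\rho_\eps=\nabla\rho$ and $\rho_\eps\geq\eps$, the regularized multiplier $w^2/\rho_\eps$ is bounded, nonnegative, compactly supported and lies in $W^{1,2}_0(\mathcal{M})$, hence (after approximation by nonnegative $C_0^\infty$ functions) is a legitimate test function for $\Delta\rho\leq 0$; the identity and estimate above then hold verbatim and give the $\eps$-uniform bound $I_\eps:=\int_{\mathcal{M}} w^2\frac{|\nabla\rho|^2}{\rho_\eps^2}\,dv_g\leq 4\int_{\mathcal{M}}|\nabla w|^2\,dv_g$. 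Finally, as $\eps\to 0^+$ the integrands increase monotonically to $w^2|\nabla\rho|^2/\rho^2$, so the monotone convergence theorem simultaneously establishes the claimed local integrability $|\nabla\rho|^2/\rho^2\in L^1_{loc}(\mathcal{M},\mathbf{R})$ and passes the uniform estimate to the limit, completing the proof.
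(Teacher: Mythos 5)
Your proof is correct: the regularization $\rho_\eps=\rho+\eps$, the use of $w^2/\rho_\eps$ as a nonnegative test function for the weak inequality $\Delta\rho\le 0$, the Cauchy--Schwarz absorption giving the sharp constant $4$, and the monotone convergence passage to $\eps\to 0$ are all sound. The paper does not prove this statement itself (it quotes it from \cite[Theorem 2.1]{DD14}), but your argument is exactly the Mitidieri-style scheme of \cite{DD14} that the paper invokes and then mirrors in its own proof of the second-order inequality (Theorem \ref{secondhardy-v2}), so it is essentially the intended proof.
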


Building on the work of D'Ambrosio and Dipierro \cite{DD14}, which utilizes techniques introduced by Mitidieri in \cite{Mit00}, we establish a general version of the second-order Hardy inequality applicable to any smooth manifold (Theorem \ref{secondhardy}). Since Theorem \ref{secondhardy} relies on Theorem \ref{secondhardy-v2}, which is valid for certain smooth manifolds, we first provide a proof of Theorem \ref{secondhardy-v2}.

\begin{proof} (of Theorem \ref{secondhardy-v2})
First note that by the assumption $\Delta \rho \leq 0$ we have in the weak sense that
$$
\Delta\left(\frac{1}{2\rho^2}\right)=\text{div}\left(-\frac{\nabla \rho}{\rho^3}\right)=-\frac{\Delta \rho}{\rho^3}+ \frac{3|\nabla \rho|^2}{\rho^4}\geq \frac{3|\nabla \rho|^2}{\rho^4}.
$$
Therefore, for any $w\in C^\infty_0(\mathcal{M}, \mathbf{R})$ we have
\begin{align}\label{hardyproof}
&3 \int_{\mathcal{M}}\frac{|w|^2|\nabla \rho|^2}{\rho^4} dv_g\leq \int_{\mathcal{M}}|w|^2\Delta\left(\frac{1}{2\rho^2}\right) dv_g\notag\\
=& -2\int_{\mathcal{M}} w\nabla w\cdot \nabla \left(\frac{1}{2\rho^2}\right) dv_g = 2\int_{\mathcal{M}} \left(|\nabla w|^2+ w\Delta w\right) \left(\frac{1}{2\rho^2}\right) dv_g \notag\\
\leq & \left( \int_{\mathcal{M}}\frac{3|w|^2|\nabla \rho|^2}{\rho^4}dv_g\right)^{1/2}\left( \int_{\mathcal{M}}\frac{|\Delta w|^2}{3|\nabla \rho|^2} dv_g\right)^{1/2}+ \int_{\mathcal{M}}\frac{|\nabla w|^2}{\rho^2} dv_g\,.
\end{align}
By applying the first-order Hardy inequality (Theorem \ref{firtHardy}) to $|\nabla w|$, and using the fact that $|\nabla \rho|\geq \delta$ a.e. on $\mathcal{M}$, along with Kato's inequality, Reilly’s formula, and the Poincaré inequality, we obtain:
$$
\int_{\mathcal{M}}\frac{|\nabla w|^2}{\rho^2} dv_g \leq \frac{1}{\delta^2}\int_{\mathcal{M}}\frac{|\nabla w|^2}{\rho^2}|\nabla \rho|^2 dv_g \leq \frac{4}{\delta^2}\int_{\mathcal{M}}|\nabla^2 w|^2 dv_g\leq \frac{C(\mathcal{M})}{\delta^2}\int_{\mathcal{M}}|\Delta w|^2dv_g\,.
$$
Inserting this back to \eqref{hardyproof} we get
\begin{align*}
\int_{\mathcal{M}}\frac{|w|^2|\nabla \rho|^2}{\rho^4} dv_g \leq C(\mathcal{M})(1+ \delta^{-2})^2\int_{\mathcal{M}} |\Delta w|^2 dv_g
\end{align*}
for some $C(\mathcal{M})>0$.
\end{proof}

\section{Proof of the main result}
In this section, we prove Theorem \ref{convexity}. Let us first fix some notations. Since $\mathcal{M}$ is compact and locally conformally flat with a smooth boundary $\partial \mathcal{M}$, we can choose a smooth atlas 
$$\{\Phi_{i}: B_{2r_i}(p_i)\to \mathbf{R}^4\}_{i=1}^N$$
for $\mathcal{M}$ 
such that $\{p_i\}_{i=1}^k\subset \mathcal{M}, \{p_i\}_{i=k+1}^N \subset \partial \mathcal{M}$ and $B_{2r_i}(p_i)$ is conformally flat for each $i$. We assume that
\beq \label{conformflat}
g=e^{2\phi_i}\bar{g} \quad \text{on } B_{2r_i}(p_i)\,,
\eeq
where $\bar{g}$ is the Euclidean metric. Here $B_{2r_i}(p_i) \subset \mathcal{M}$ denotes the set
$$
B_{2r_i}(p_i):= \{y\in \mathcal{M}\cup \partial \mathcal{M}: \text{dist}_g(y, p_i)< 2r_i\}, \quad i=1,\cdots, N.
$$
Moreover,
$$\mathcal{M} \cup \partial \mathcal{M} \text{ is covered by } \left\{B_{r_i}(p_i)\right\}_{i=1}^N $$
and every point in $\mathcal{M} \cup \partial \mathcal{M}$ is covered by $\left\{B_{2r_i}(p_i)\right\}_{i=1}^N$ at most $L$ times, see e.g. \cite[Lemma 3.3]{Str85}.
For $i=1,\cdots, N$, let 
$$U_i:= \Phi_i(B_{r_i}(p_i))\quad \text{and}\quad 2U_i:= \Phi_i(B_{2r_i}(p_i))\,.$$
When choosing the smooth atlas, we additionally require that there exists $\delta>0$ depending only on $\mathcal{M}$ and $\partial \mathcal{M}$ such that for any $i=k+1,\cdots,N$ and any point $x\in \{y\in \mathcal{M}: \text{dist}(y, \partial \mathcal{M}) \leq \delta\} \cap B_{r_i}(p_i)$ we have
\begin{equation}\label{interiorball}
\Phi_i(B_{\rho(x)}(x))\subset U_j \quad \text{for some } j=k+1,\cdots, N\,,    
\end{equation}
where $\rho(x)=\text{dist}_g(x,\partial \mathcal{M})${\footnote{To see this, let $\Sigma_i=\partial B_{r_i}(p_i)\cap\partial \mathcal{M}, i=k+1,\cdots, N$ be the projection of $ B_{r_i}(p_i)$ onto $\partial M$ by normal geodesics. $\Gamma_i=\partial \Sigma_i$ denotes its boundary in $\partial \mathcal{M}$. We choose the smooth atlas of $\mathcal{M}$ in such a way that every $\Sigma_i$ is covered by finitely many $\Sigma_{j_\alpha}$'s and all the intersecting points $q_{st}=\Gamma_{s}\cap\Gamma_t\cap\Sigma_i$ stay in the interior of $\Sigma_{l}$, where $\alpha=1,\dots,\alpha_i$, $j_\alpha\in [k+1, N]$, $l\neq s,t$ and $s,t,l=i,j_1,\dots,j_{\alpha_i}$. Then there exists a constant $C_i>0$  such that for any $y_0\in \Sigma_i$, there holds $\text{dist}(y_0,\Gamma_l)\geq C_i$ for some $l=i,j_1,\dots,j_{k_i}$. Now, for any $x\in \{y\in \mathcal{M}: \text{dist}_g(y,\partial \mathcal{M})\leq \delta\}$, there exists $x_0\in \partial \mathcal{M}$ such that $x_0\in \Sigma_i$ for some $i\in [k+1,N]$ and $\rho(x)=\text{dist}_g(x,\partial \mathcal{M})=\text{dist}_g(x,x_0)$. Since $B_{\rho(x)}(x)\subset B_{\delta}(x)\subset B_{2\delta}(x_0)$, we have $B_{\rho(x)}(x)\subset B_{r_i}(p_i)$ for some $i=k+1,\dots, N$ if we choose $2\delta<\min\limits_{i=k+1,\dots,N}\{C_i\}$.}}. Moreover, 
\begin{equation}\label{boundarynbhd}
    \left\{y\in \mathcal{M}: \text{dist}(y, \partial \mathcal{M}) \geq \delta\right\} \subset \bigcup_{i=1,\cdots, k} B_{r_i}(p_i)\,.
\end{equation}

Let $\Omega_1:=B_{r_1}(p_1)$ and
\beq\label{omegaset}
\Omega_{i+1}:= \left(\bigcup_{j=1,\cdots,i+1}B_{r_j}(p_j)\right)\setminus \left(\bigcup_{j=1,\cdots, i}\Omega_{j}\right), \quad i=1,\cdots, N-1
\eeq
be a disjoint partition of $\mathcal{M}$. Now for any $u\in W^{2,2}(\mathcal{M}, \mathbf{S}^n)$ we define
\begin{equation}\label{ubar}
\bar{u}_i(x):= u\left( \Phi_i^{-1}(x)\right)\,, \quad x\in 2U_i,\quad i=1,\cdots, N\,.
\end{equation}
\begin{rmk}\label{local-biharmonic}
Note that by Proposition \ref{local property} we know that if $u$ is a conformal-harmonic map on $\mathcal{M}$, then $\bar{u}_i$ defined in \eqref{ubar} is an intrinsic biharmonic map on $2U_i, i=1,\cdots, N$. 
\end{rmk}

In what follows, we will denote $|\cdot|$ the norm with respect to the Euclidean metric in $\mathbf{R}^K$. As before, $\bar\nabla$ and $\bar\Delta$ will denote the connection and Laplacian with respect to the Euclidean metric $\bar g$, but note that on each $\Omega_i, i=1,\cdots, N$ the information of $\Phi_i$ is embedded in these two operators and we do not differentiate the notations.

\begin{lem}\label{lhs}
There exists $\epsilon_0>0$ depending only on $\mathcal{M}$ such that if $u,v$ are as in Theorem \ref{convexity}, then we have
\begin{equation}\label{tau laplace}
\int_{ \mathcal{M}}|\bar\Delta(v-u)|^2 dv_{\bar{g}}\leq 4\int_{ \mathcal{M}}|\bar\tau(v)-\bar\tau(u)|^2 dv_{\bar{g}}\,,
\end{equation}
where $\bar\tau(u) = (\bar\Delta u)^T$ is the tension field of $u$ with respect to the flat connection.
\end{lem}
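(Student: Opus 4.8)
The plan is to reduce \eqref{tau laplace} to a pointwise decomposition of $\bar\Delta(v-u)$ into the tension-field difference plus a controllable error, and then to absorb that error into the left-hand side using the smallness hypotheses \eqref{bienergy condition} together with the second order Hardy inequality. Write $w:=v-u$ and $T:=\bar\tau(v)-\bar\tau(u)$. For maps into $\mathbf{S}^n\subset\mathbf{R}^{n+1}$ the normal part of the Laplacian is radial; the Euclidean analogue of \eqref{decomposition laplace} gives $(\bar\Delta u)^\perp=P^\perp(\bar\Delta u)=-|\bar\nabla u|^2u$, so that
\begin{equation*}
\bar\Delta u=\bar\tau(u)-|\bar\nabla u|^2u,\qquad \bar\Delta v=\bar\tau(v)-|\bar\nabla v|^2v.
\end{equation*}
Subtracting, $\bar\Delta w=T-E$ with $E:=|\bar\nabla v|^2v-|\bar\nabla u|^2u$, whence $\|\bar\Delta w\|_{L^2}\le\|T\|_{L^2}+\|E\|_{L^2}$. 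Thus \eqref{tau laplace} follows once I establish the absorption estimate $\|E\|_{L^2(\mathcal M)}\le\tfrac12\|\bar\Delta w\|_{L^2(\mathcal M)}$, since this forces $\|\bar\Delta w\|_{L^2}\le2\|T\|_{L^2}$, i.e. $\|\bar\Delta w\|_{L^2}^2\le4\|T\|_{L^2}^2$. All quantities are computed chart by chart over the disjoint partition $\{\Omega_i\}$ of \eqref{omegaset}; the conformal factors $e^{2\phi_i}$ of \eqref{conformflat} are smooth and bounded on the compact $\mathcal M$, so the Euclidean and manifold quantities are uniformly comparable and the Hardy inequalities of Section \ref{Hardyineq} transfer with harmless constant changes.

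Next I would estimate $E$. Writing $v=u+w$ and using $|u|=1$ yields the pointwise bound $|E|\lesssim|\bar\nabla v|^2|w|+|\bar\nabla w|(|\bar\nabla v|+|\bar\nabla u|)$. The essential difficulty, and the place I expect to spend the most care, is that $v$ is an \emph{arbitrary} competitor satisfying no equation, so the pointwise $\eps$-regularity bounds of Theorem \ref{ereg} are available only for $u$ (equivalently each $\bar u_i$, which is intrinsically biharmonic by Remark \ref{local-biharmonic} and solves an equation of the form \eqref{ibe02}), never for $v$. I would therefore split off $u$ via $|\bar\nabla v|^2\le2|\bar\nabla u|^2+2|\bar\nabla w|^2$, reducing $\|E\|_{L^2}^2$ to a sum of the four model integrals
\begin{equation*}
\int_{\mathcal M}|\bar\nabla u|^4|w|^2,\qquad \int_{\mathcal M}|\bar\nabla u|^2|\bar\nabla w|^2,\qquad \int_{\mathcal M}|\bar\nabla w|^4|w|^2,\qquad \int_{\mathcal M}|\bar\nabla w|^4.
\end{equation*}
For the two integrals weighted by $|\bar\nabla u|$ I would use the rescaled pointwise estimate \eqref{PTWSest}, applied on the interior balls $B_{\rho(x)}(x)$ supplied by \eqref{interiorball}, in the form $|\bar\nabla u|(x)^2\lesssim\rho(x)^{-2}\|\bar\Delta u\|_{L^2(\mathcal M)}^2\lesssim\epsilon_0\,\rho(x)^{-2}$. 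Together with Theorem \ref{secondhardy} and the first order Hardy estimate $\int_{\mathcal M}|\bar\nabla w|^2\rho^{-2}\lesssim\int_{\mathcal M}|\bar\nabla^2w|^2\lesssim\int_{\mathcal M}|\bar\Delta w|^2$ derived exactly as in the proof of Theorem \ref{secondhardy-v2}, this bounds them by $C\epsilon_0\|\bar\Delta w\|_{L^2}^2$. Here I use crucially that $w=\partial_\nu w=0$ on $\partial\mathcal M$, so that $\bar\nabla w$ vanishes on $\partial\mathcal M$ and both the first order Hardy inequality and Reilly's formula apply to it.

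For the two $|\bar\nabla w|^4$-type integrals I would instead exploit that $u,v$ are sphere-valued, so $|w|\le2$, together with the critical embedding $W^{2,2}\hookrightarrow W^{1,4}$ in dimension four: by Reilly's formula and Poincar\'e (again using the vanishing boundary data of $w$) one has $\|\bar\nabla w\|_{L^4}\lesssim\|\bar\nabla^2w\|_{L^2}\lesssim\|\bar\Delta w\|_{L^2}$, while \eqref{bienergy condition} and \eqref{PTWSest} give $\|\bar\nabla w\|_{L^4}\le\|\bar\nabla u\|_{L^4}+\|\bar\nabla v\|_{L^4}\lesssim\epsilon_0^{1/4}$. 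Hence $\int_{\mathcal M}|\bar\nabla w|^4|w|^2\le4\|\bar\nabla w\|_{L^4}^4\lesssim\epsilon_0^{1/2}\|\bar\Delta w\|_{L^2}^2$, and likewise for $\int_{\mathcal M}|\bar\nabla w|^4$. Collecting the four bounds gives $\|E\|_{L^2}^2\le C\epsilon_0^{1/2}\|\bar\Delta w\|_{L^2}^2$, so choosing $\epsilon_0$ with $C\epsilon_0^{1/2}\le\tfrac14$ yields $\|E\|_{L^2}\le\tfrac12\|\bar\Delta w\|_{L^2}$ and completes the proof. The resolution of the main obstacle is thus the clean separation of the unbounded-gradient contributions (carried entirely by the equation-solving $u$ and tamed by $\eps$-regularity plus Hardy) from the bounded contribution $|w|\le2$ (tamed by the Sobolev embedding), with the smallness of both bienergies furnishing the small constant needed for absorption.
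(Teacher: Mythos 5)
Your proposal is correct and follows essentially the same route as the paper: decompose $\bar\Delta$ into the tension field plus the normal part $-|\bar\nabla \cdot|^2(\cdot)$ for sphere-valued maps, control the error term $\int_{\mathcal M}|v-u|^2|\bar\nabla u|^4$ by combining the pointwise $\eps$-regularity decay $|\bar\nabla u|\lesssim\epsilon_0^{1/4}\rho^{-1}$ (valid since $u$ is locally intrinsically biharmonic) with the second order Hardy inequality, control the gradient terms by the Sobolev embedding $W^{2,2}\cap W^{1,2}_0\hookrightarrow W^{1,4}$, and absorb for small $\epsilon_0$. The only cosmetic difference is that the paper expands the square and handles the cross term $\int_{\mathcal M}\bigl||\bar\nabla v|^2-|\bar\nabla u|^2\bigr|^2$ purely by Cauchy--Schwarz in $L^4$, whereas you invoke a first order Hardy inequality for $\bar\nabla(v-u)$; both work with the ingredients already established.
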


\begin{proof}
Let $\bar{u}_i, \bar{v}_i$ be defined as in \eqref{ubar}, then use the conformal change of Laplacian (c.f. \eqref{conformalchange-1}), for $i=1,\cdots, N$ we get
\begin{align}\label{laplacecom-1}
  \int_{2U_i}|\bar{\Delta} \bar u_i|^2dx 
  &=  \int_{B_{2r_i}(p_i)}|\bar{\Delta} u|^2 dv_{\bar{g}}\notag\\
  &\leq  C_i\int_{B_{2r_i}(p_i)}(|\Delta u|^2+|\nabla u|^2)dv_g\leq C_i\sqrt{\epsilon_0}
\end{align}
and
\beq\label{laplacecom-2}
\int_{2U_i}|\bar{\nabla} \bar{v}_i|^4dx = \int_{B_{2r_i}(p_i)}|\bar{\nabla} v|^4 dv_{\bar{g}}=\int_{B_{2r_i}(p_i)}|\nabla v|^4dv_g \leq \epsilon_0\,,
\eeq
where $C_i=C_i(\Phi_i, \phi_i)>0$ are positive constants. Moreover, using \eqref{laplacecom-1}, \eqref{laplacecom-2}, \eqref{bienergy condition} and  
$$\int_{\mathcal{M}}|\Delta u|^2dv_g =  \int_{\mathcal{M}}|\tau(u)|^2 dv_g+\int_{\mathcal{M}}|\nabla u|^4 dv_g \,,$$ 
we have
\beq \label{smallenergy-1}
\int_{\mathcal{M}} |\bar\Delta u|^2 dv_{\bar g} \leq C\int_{\mathcal{M}} \left(|\Delta u|^2 + |\nabla u|^2\right)dv_{g} \leq C\sqrt{\epsilon_0}
\eeq
and
\beq \label{smallenergy-2}
\int_{\mathcal{M}} |\bar\nabla v|^4 dv_{\bar g} = \int_{\mathcal{M}} |\nabla v|^4dv_{g} \leq \epsilon_0\,,
\eeq
where $C>0$ may depend on all $\phi_i$ and $\Phi_i, i=1,\cdots, N$. Now using the decomposition $\bar\Delta u = \bar\tau(u)-u|\bar\nabla u|^2$, we have
    \begin{align*}
       &\int_{\mathcal{M}}|\bar\Delta( v- u)|^2dv_{\bar g}=\int_{ \mathcal{M}}| \bar\tau(  v)- \bar\tau(  u)-  v| \bar\nabla   v|^2+  u| \bar\nabla   u|^2|^2 dv_{\bar g}\\
       \leq &2\int_{ \mathcal{M}}| \bar\tau(  v)- \bar\tau(  u)|^2 dv_{\bar g} +2\int_{ \mathcal{M}}\left|  v| \bar\nabla   v|^2-  u| \bar\nabla   u|^2\right|^2 dv_{\bar g}\\
       =&2\int_{ \mathcal{M}}| \bar\tau(  v)- \bar\tau(  u)|^2 dv_{\bar g} +2\int_{ \mathcal{M}}\left|  v\left(| \bar\nabla   v|^2-| \bar\nabla   u|^2\right)+(  v-  u)| \bar\nabla   u|^2\right|^2 dv_{\bar g}\\
       \leq&2\int_{ \mathcal{M}}| \bar\tau(  v)- \bar\tau(  u)|^2 dv_{\bar g}+4\int_{ \mathcal{M}}\left|| \bar\nabla   v|^2-| \bar\nabla   u|^2\right|^2dv_{\bar g} +4\int_{ \mathcal{M}}|  v-  u|^2| \bar\nabla   u|^4dv_{\bar g}\\
       \leq&2\int_{ \mathcal{M}}| \bar\tau(  v)- \bar\tau(  u)|^2 dv_{\bar g} +4\left(\int_{ \mathcal{M}}| \bar\nabla(  v+  u)|^4dv_{\bar g}\right)^{\frac12}\left(\int_{ \mathcal{M}}| \bar\nabla(  v-  u)|^4dv_{\bar g}\right)^{\frac12}dv_{\bar g}\\
       &+4\int_{ \mathcal{M}}|  v-  u|^2|\bar \nabla   u|^4dv_{\bar g}\\
       \leq&2\int_{ \mathcal{M}}| \bar\tau(  v)- \bar\tau(  u)|^2dv_{\bar g} +C\sqrt{\epsilon_0}\int_{ \mathcal{M}}| \bar\Delta (  v-  u)|^2dv_{\bar g}+4\int_{ \mathcal{M}}|  v-  u|^2| \bar\nabla   u|^4dv_{\bar g}\,,
   \end{align*}
where we have used \eqref{bienergy condition}, \eqref{smallenergy-1}, \eqref{smallenergy-2},
\begin{equation}\label{smallenergy-3}
     \int_{\mathcal{M}}|\bar \nabla u|^4dv_{\bar g}\leq\int_{\mathcal{M}}|\bar \Delta u|^2dv_{\bar g} \leq C\sqrt{\epsilon_0}
\end{equation}
and
\begin{equation}\label{control-1}
    \left(\int_{\mathcal{M}}|\bar\nabla(v-u)|^4dv_{\bar g}\right)^{\frac14}\leq C\left(\int_{\mathcal{M}}|\bar\Delta (v-u)|^2dv_{\bar g}\right)^{\frac12}\,,
    \end{equation}
where $C>0$ is a universal constant.
For the last term above, we claim that
\begin{equation}\label{claim}
    \int_{\mathcal{M}}|v-u|^2|\bar\nabla u|^4dv_{\bar g}\leq C_0{\epsilon_0}\int_{\mathcal{M}}|v-u|^2\rho^{-4}dv_{g}\,,
\end{equation}
where $\rho(x)=\text{dist}_g(x,\partial\mathcal{M})$ and $C_0=C_0(\mathcal{M})>0$. If the claim \eqref{claim} is true, then the second Hardy inequality (Theorem \ref{secondhardy}) imply that (since $v-u\in W^{2,2}_0(\mathcal{M}, \mathbf{S}^n)$ and $\frac{\partial(v-u)}{\partial \nu}=0$ on $\partial \mathcal{M}$)
\begin{align}\label{claim-2}
   &\int_{\mathcal{M}}|v-u|^2|\bar\nabla u|^4 dv_{\bar g}
   \leq C{\epsilon_0}\int_{\mathcal{M}}|\Delta (v-u)|^2dv_{g}\notag\\
   = &C{\epsilon_0}\sum_{i=1,\cdots, N}\int_{\Omega_i}|\Delta (v-u)|^2dv_{g}\notag\\
   \leq &C{\epsilon_0}\sum_{i=1,\cdots, N}\int_{\Omega_i}\left(|\bar\Delta (v-u)|^2 +|\bar\nabla (v-u)|^2 \right)dv_{\bar g} \notag\\
   =&C{\epsilon_0} \int_{\mathcal{M}}\left(|\bar\Delta (v-u)|^2 +|\bar\nabla (v-u)|^2 \right)dv_{\bar g} \leq C{\epsilon_0} \int_{\mathcal{M}}|\bar\Delta (v-u)|^2 dv_{\bar g}\,.
\end{align}
where we used
$\int_{\Omega_i}|\Delta (v-u)|^2dv_{g} \leq C_i\int_{\Omega_i}\left(|\bar\Delta (v-u)|^2+|\bar\nabla (v-u)|^2 \right)dv_{\bar g}$ for each $i$, which is similar to \eqref{laplacecom-1}. Therefore, we obtain \eqref{tau laplace} by choosing $\epsilon_0$ sufficiently small. To complete the proof of this lemma, it remains to show that the claim \eqref{claim} holds.

Using \eqref{laplacecom-1}, Remark \ref{local-biharmonic} and Theorem \ref{ereg}, and choosing $\epsilon_0$ small enough we have 
\begin{equation}
    |\bar{\nabla}^l \bar{u}_i|(x)\leq C(\text{dist}_{\bar g}(x, \partial (2U_i)))^{-l}{\epsilon_0}^{1/4}\,,
\end{equation}
for any $x\in 2U_i, i=1,\cdots, k$ and $l=1,2,3$, where $C>0$ depends on the $C_i$ above.
In particular, define
$$
r_0 := \min_{i=1,\cdots, k}\text{dist}_{\bar g}(U_i, \partial(2U_i))\,,
$$
then we have
\begin{equation}
    |\bar{\nabla}^l \bar{u}_i|(x)\leq Cr_0^{-l}{\epsilon_0}^{1/4} \quad\text{for any } x\in U_i, i=1,\cdots, k \text{ and } l=1,2,3,
\end{equation}
\begin{equation}\label{gradientbound1}
    |\bar{\nabla}^l u|(x)\leq Cr_0^{-l}{\epsilon_0}^{1/4} \quad\text{for any } x\in B_{r_i}(p_i), i=1,\cdots, k \text{ and } l=1,2,3,
\end{equation}
and therefore
\begin{equation}\label{gradientbound2}
    |\bar \nabla^l u|(x)\leq Cr_0^{-l}{\epsilon_0}^{1/4}(\text{diam}_g(\mathcal{M}))^l\rho^{-l}(x) \leq C_1{\epsilon_0}^{1/4} \rho^{-l}(x)
\end{equation}
for any $x\in B_{r_i}(p_i), i=1,\cdots, k$ and $l=1,2,3$, where the $C>0$ in \eqref{gradientbound1} and \eqref{gradientbound2} may depend on $\Phi_i, i=1,\cdots, k,$ and 
$$C_1: =\max_{l=1,2,3}\left\{ Cr_0^{-l}(\text{diam}_g(\mathcal{M}))^l\right\}\,.$$

Now by the choice of the smooth atlas, in particular, \eqref{interiorball}, with a similar argument as above, we have
\begin{equation}\label{gradientbound3}
    |\bar \nabla^l u|(x)\leq  C_2{\epsilon_0}^{1/4} \rho^{-l}(x)
\end{equation}
for any $x\in B_{r_i}(p_i)\cap \{y\in \mathcal{M}: \text{dist}(y, \partial \mathcal{M}) \leq \delta\}, i=k+1,\cdots, N$ and $l=1,2,3$. Here $C_2>0$ may depend on all $\Phi_i$ and $\phi_i, i=k,\cdots, N.$ Then with a argument similar to \eqref{claim-2} we get \eqref{claim}. This completes the proof of the lemma.
\end{proof}

\begin{lem}\label{key estimate2}
There exists $\epsilon_0>0$ depending only on $\mathcal{M}$ such that if $u,v$ are as in Theorem \ref{convexity}, then we have
\begin{align}\label{difference laplace}
  &\int_{\mathcal{M}}|\bar\Delta v|^2dv_{\bar g}-\int_{\mathcal{M}}|\bar \Delta u|^2dv_{\bar g}-\int_{\mathcal{M}}|\bar \Delta (v-u)|^2dv_{\bar g}\notag\\
  \geq&-C{\epsilon_0}^{1/4}\int_{\mathcal{M}}|\bar \Delta (v-u)|^2dv_{\bar g}+4\int_{\mathcal{M}}|\bar \nabla u|^2\bar \nabla u \cdot\bar \nabla(v-u)dv_{\bar g}\,.
\end{align}
\end{lem}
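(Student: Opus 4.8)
The plan is to start from the pointwise algebraic identity
$$|\bar\Delta v|^2-|\bar\Delta u|^2-|\bar\Delta(v-u)|^2=2\,\bar\Delta u\cdot\bar\Delta(v-u),$$
so that, writing $w:=v-u$, the left-hand side of \eqref{difference laplace} equals $2\int_{\mathcal{M}}\bar\Delta u\cdot\bar\Delta w\,dv_{\bar g}$. Since $u=v$ and $\partial_\nu u=\partial_\nu v$ on $\partial\mathcal{M}$, the function $w$ vanishes to first order on $\partial\mathcal{M}$ (i.e. $w=\bar\nabla w=0$ there in the trace sense), so every integration by parts below is free of boundary terms. As in Lemma \ref{lhs}, the whole computation is carried out chartwise on the partition $\{\Omega_i\}$, so I may treat $\bar\nabla,\bar\Delta$ as flat operators and freely use the pointwise bounds $|\bar\nabla^l u|\leq C\epsilon_0^{1/4}\rho^{-l}$ for $l=1,2,3$ established there (cf.\ \eqref{gradientbound2}--\eqref{gradientbound3}).

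The key structural input is the sphere constraint $|u|^2=|v|^2\equiv 1$, which gives $u\cdot w=-\tfrac12|w|^2$ and, upon differentiating, $u\cdot\bar\nabla w=-\bar\nabla u\cdot w-w\cdot\bar\nabla w$ and $u\cdot\bar\Delta w=-\bar\Delta u\cdot w-2\,\bar\nabla u\cdot\bar\nabla w-|\bar\nabla w|^2-w\cdot\bar\Delta w$. I decompose $\bar\Delta u=\bar\tau(u)-|\bar\nabla u|^2u$ and insert the last identity into the normal contribution $-|\bar\nabla u|^2(u\cdot\bar\Delta w)$. This produces precisely the advertised term $4\int_{\mathcal{M}}|\bar\nabla u|^2\bar\nabla u\cdot\bar\nabla w$, together with quadratic-in-$w$ remainders of the types $\int|\bar\nabla u|^2|\bar\nabla w|^2$, $\int|\bar\nabla u|^2w\cdot\bar\Delta w$, $\int|\bar\nabla u|^4|w|^2$, one residual linear-in-$w$ term $\int|\bar\nabla u|^2\bar\Delta u\cdot w$, while the tangential part leaves $2\int\bar\tau(u)\cdot\bar\Delta w$. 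The quadratic remainders are dispatched by the Hardy machinery: $\int|\bar\nabla u|^2|\bar\nabla w|^2\leq C\epsilon_0^{1/2}\int\rho^{-2}|\bar\nabla w|^2\leq C\epsilon_0^{1/2}\int|\bar\Delta w|^2$ by the first-order Hardy inequality applied to $\bar\nabla w$ (as in the proof of Theorem \ref{secondhardy-v2}), the $|w|^2$-terms are absorbed via $\int\rho^{-4}|w|^2\leq C\int|\bar\Delta w|^2$ from the second-order Hardy inequality (Theorem \ref{secondhardy}), and the mixed term is controlled by Cauchy--Schwarz after splitting its weight as $\rho^{-4}|w|^2$ against $|\bar\Delta w|^2$; in each case the factor $|\bar\nabla u|^2$ or $|\bar\nabla u|^4$ supplies a power of $\epsilon_0$, so these contribute at most $C\epsilon_0^{1/4}\int|\bar\Delta w|^2$.

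The delicate point, and the only place where criticality of $u$ enters, is the combination of the tension term $2\int\bar\tau(u)\cdot\bar\Delta w$ with the residual linear term $\int|\bar\nabla u|^2\bar\Delta u\cdot w$. After one integration by parts and the constraint identity for $u\cdot\bar\nabla w$, both are linear-in-$w$ integrals carrying the weight $\rho^{-4}$, which is \emph{not} controllable by the Hardy inequalities on its own (only $|w|^2\rho^{-4}$, not $|w|\rho^{-4}$, is integrable near $\partial\mathcal{M}$). To close the estimate I integrate $2\int\bar\tau(u)\cdot\bar\Delta w=2\int\bar\Delta^2u\cdot w$ by parts and substitute the intrinsic biharmonic map equation \eqref{ibe02}--\eqref{ibe03}, valid chartwise by Remark \ref{local-biharmonic}. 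Using the identities $V\bar\nabla u=u|\bar\nabla u|^2$ and $\mathfrak{w}\,\bar\nabla u=u(\bar\Delta u\cdot\bar\nabla u)$ together with the explicit $W\bar\nabla u$, and applying the constraint relations repeatedly, the dangerous linear contributions cancel in pairs: for instance the two copies of $\int\sum_\alpha(\bar\Delta u\cdot\partial_\alpha u)(\partial_\alpha u\cdot w)$ coming from the $\mathrm{div}(\mathfrak{w}\,\bar\nabla u)$ and the $W\bar\nabla u$ terms cancel, and the remaining linear pieces recombine with $\int|\bar\nabla u|^2\bar\Delta u\cdot w$ to restore the full coefficient $4$ in $4\int|\bar\nabla u|^2\bar\nabla u\cdot\bar\nabla w$, modulo quadratic remainders of the types already controlled above.

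The main obstacle is exactly this cancellation of linear-in-$w$ terms. It is forced by the Euler--Lagrange equation for $u$ (a non-critical $u$ would leave an uncontrollable $\rho^{-4}$-weighted linear term), and carrying it out requires keeping careful track of signs through the substitution of \eqref{ibe03}; the sphere constraint $u\cdot w=-\tfrac12|w|^2$ is indispensable, as it is precisely what converts the would-be linear boundary-singular terms into quadratic ones amenable to Theorems \ref{secondhardy} and \ref{firtHardy}. Combining the extracted main term with the $O(\epsilon_0^{1/4})$ bound on all remainders yields \eqref{difference laplace} for $\epsilon_0$ sufficiently small.
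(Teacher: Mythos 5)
Your overall strategy --- polarize to $2\int_{\mathcal M}\bar\Delta u\cdot\bar\Delta w\,dv_{\bar g}$ with $w=v-u$, use the sphere constraint to render the normal component of $w$ quadratic, invoke the chartwise biharmonic equation for the surviving linear-in-$w$ terms, and close with the pointwise decay bounds plus the Hardy inequalities --- is the right one and uses the same ingredients as the paper. But the execution has a genuine gap at precisely the step you flag as delicate. First, the identity $2\int\bar\tau(u)\cdot\bar\Delta w=2\int\bar\Delta^2u\cdot w$ is false: since $\bar\tau(u)=\bar\Delta u+u|\bar\nabla u|^2$ on the sphere, two integrations by parts give $2\int\bar\tau(u)\cdot\bar\Delta w=2\int\bar\Delta^2u\cdot w+2\int|\bar\nabla u|^2\,u\cdot\bar\Delta w$, and the omitted term is exactly the negative of the term $-2\int|\bar\nabla u|^2\,u\cdot\bar\Delta w$ whose constraint-expansion produced your main term $4\int|\bar\nabla u|^2\bar\nabla u\cdot\bar\nabla w$ in the first place. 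So the decomposition $\bar\Delta u=\bar\tau(u)-u|\bar\nabla u|^2$ followed by a \emph{correct} reconversion of the tension term is circular (it returns you to $2\int\bar\Delta^2u\cdot w$ with nothing extracted), while the version you wrote down double-counts and cannot be trusted to produce the coefficient $4$. Second, the pairwise cancellation of the $\rho^{-4}$-weighted linear terms after substituting \eqref{ibe02}--\eqref{ibe03} is asserted rather than carried out, and with the sign error above there is no way to verify that the surviving main term has the claimed coefficient.

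The paper's proof avoids the delicate cancellation altogether by organizing the computation differently: it passes immediately to $2\int\langle\bar\Delta^2u,\,v-u\rangle$ and splits $\bar\Delta^2u$ into its normal part $P^\perp(\bar\Delta^2u)$ --- which pairs against $(v-u)^\perp=-\tfrac12|v-u|^2u$, hence is quadratic in $v-u$ and absorbed using the pointwise bound $|P^\perp(\bar\Delta^2u)|\leq C\epsilon_0^{1/2}\rho^{-4}$ together with Theorem \ref{secondhardy} --- and its tangential part, which is replaced by the Euler--Lagrange nonlinearity. For the sphere target the three tangential terms are then, respectively, identically zero (by $Pu=0$ and $u\cdot\bar\nabla u=0$), exactly the main term $4\int|\bar\nabla u|^2\bar\nabla u\cdot\bar\nabla(v-u)$ after a single integration by parts of the divergence term, and $-\int|\bar\nabla u|^4\,u\cdot(v-u)$, which is again quadratic by the constraint. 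No linear-in-$w$ terms survive, so no cancellation needs to be engineered. You should either rewrite your argument along these lines, or carry out your substitution of \eqref{ibe03} in full with the corrected identity $2\int\bar\tau(u)\cdot\bar\Delta w=2\int\bar\Delta\bar\tau(u)\cdot w$ and track the coefficient of the main term explicitly.
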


\begin{proof}

By Remark \ref{local-biharmonic}, we know that $u$ is an intrinsic biharmonic map on each $\Omega_i, i=1,\cdots, N,$ with respect to the flat connection $\bar \nabla$, namely, we have
\begin{align*}
\bar\Delta^2u  = &P(\bar\Delta^2 u)+P\langle\bar\nabla P^\bot\bar\nabla u,{D}_u({D}_u P^\bot)\bar\nabla u\bar\nabla u\rangle \notag\\
&-2{\rm div}_{\bar g}\langle\bar\nabla P^\bot\bar\nabla u,\bar\nabla P^\bot P\rangle+2\langle\bar\nabla P^\bot\bar\nabla u,\bar\nabla P^\bot\bar\nabla P\rangle\,.
\end{align*}

Since $u=v$, $\partial_\nu u=\partial_\nu v$ on $\partial \mathcal{M}$, we have
\begin{align*}
   & \int_{\mathcal{M}}|\bar\Delta v|^2dv_{\bar g}-\int_{\mathcal{M}}|\bar\Delta u|^2dv_{\bar g}-\int_{\mathcal{M}}|\bar\Delta (v-u)|^2dv_{\bar g}\\
   =&2\int_{\mathcal{M}}\left\langle \bar\Delta^2u,v-u\right\rangle dv_{\bar g}\\
   =&2\int_{\mathcal{M}}\left\langle P^\bot(\bar \Delta^2u),v-u\right\rangle dv_{\bar g}+2\int_{\mathcal{M}}\left\langle  P(\langle\bar\nabla P^\bot\bar\nabla u,{D}_u({D}_u P^\bot)\bar\nabla u \bar\nabla u\rangle),v-u\right\rangle dv_{\bar g}\\
&-4\int_{\mathcal{M}}\left\langle{\rm div}_{\bar g}\langle\bar\nabla P^\bot\bar\nabla u,\bar\nabla P^\bot P\rangle,v-u\right\rangle dv_{\bar g}+4\int_{\mathcal{M}}\left\langle\langle\bar\nabla P^\bot\bar\nabla u,\bar\nabla P^\bot\bar\nabla P\rangle,v-u\right\rangle  dv_{\bar g}\\
=&2\int_{\mathcal{M}}\left\langle P^\bot(\bar\Delta^2u),v-u\right\rangle dv_{\bar g} +2\int_{\mathcal{M}}\left\langle\langle \bar\nabla P^\bot\bar\nabla u,{D}_u({D}_u P^\bot)\bar\nabla u\bar\nabla u\rangle,P(v-u)\right\rangle dv_{\bar g}\\
&+4\int_{\mathcal{M}}\left\langle\langle\bar\nabla P^\bot\bar\nabla u,\bar\nabla P^\bot P\rangle,\bar\nabla(v-u)\right\rangle dv_{\bar g}+4\int_{\mathcal{M}}\left\langle\langle\bar\nabla P^\bot\bar\nabla u,\bar\nabla P^\bot\bar\nabla P\rangle,v-u\right\rangle dv_{\bar g}\\
=:&\textbf{I}+\textbf{II}+\textbf{III}+\textbf{IV}.
\end{align*}

For term \textbf{I}, we can use \ref{secondhardy}, a similar argument as in the proof of \eqref{claim} and the second order Hardy inequality (Theorem \ref{secondhardy}) to get
\begin{align}
    2\int_{\mathcal{M}}\left\langle P^\bot(\bar\Delta^2u),v-u\right\rangle dv_{\bar g}
    &\geq-C\int_{\mathcal{M}}\left|(v-u)^\bot\right|\cdot\left|P^\bot(\Delta^2u)\right| dv_{\bar g}\notag\\
    &\geq-C{\epsilon_0}^{1/4}\int_{\mathcal{M}}|v-u|^2\rho(x)^{-4} dv_g\notag\\
    &\geq-C{\epsilon_0}^{1/4}\int_{\mathcal{M}}|\Delta(v-u)|^2 dv_g\notag\\
    &\geq -C{\epsilon_0}^{1/4}\int_{\mathcal{M}}|\bar \Delta(v-u)|^2 dv_{\bar g}\,.
\end{align}

For term \textbf{II}, we note that the integrand reads as
$$ \sum_{\alpha, \beta,\gamma, i, j, k, m,l} P_{lk} \bar\nabla_\alpha (P^\bot)_{ij} \bar\nabla_{\alpha} u^j  D_{u^k} D_{u^\beta}(P^\bot)_{im}  \bar\nabla_\gamma u^\beta \bar\nabla_\gamma u^m (v-u)^l\,.$$
Now since the target manifold is $\mathbf{S}^n$, we know that $u$ is the unit normal vector at the point $u\in  \mathbf{S}^n$ and $P^\bot({\mathbf{v}}) = \langle {\mathbf{v}}, u\rangle u$ for any vector ${\mathbf{v}} \in T_u(\mathbf{R}^{n+1})$ so that
$$
P^\bot_{ij} = u^i u^j \quad\text{and}\quad P^\bot (\bar\Delta u) = - \bar\nabla P^\bot \bar\nabla u = - u |\bar\nabla u|^2\,.
$$
Therefore, in this case the integrand in term \textbf{II} becomes
\beq
\label{termII}
\sum_{\beta,\gamma, i, k, m,l} P_{lk}   u^i|\nabla u|^2 (\delta_{i\beta}\delta_{mk} + \delta_{ik}\delta_{m\beta}) \bar\nabla_\gamma u^\beta \bar\nabla_\gamma u^m (v-u)^l \,=\,0\,.
\eeq

For term \textbf{III}, we have (using again $P^\bot_{ij} = u^i u^j$)
\beq \label{termIII}
 \left\langle \langle\bar\nabla P^\bot\bar\nabla u,  \bar\nabla P^\bot  P \rangle, \bar\nabla(v-u)\right\rangle
=|\bar\nabla u|^2\bar\nabla u \cdot \bar\nabla (v-u).
\eeq

For term \textbf{IV}, we use again $P^\bot_{ij} = u^i u^j, P + P^\bot = \text{Id}$ and also $u\cdot \bar\nabla u =0$ on $\mathbf{S}^n$ to get
\begin{align*}
 &\int_{\mathcal{M}}\left\langle\langle \bar\nabla P^\bot \bar\nabla u, \bar\nabla P^\bot  \bar\nabla P\rangle , v-u\right\rangle dv_{\bar g}= \int_{\mathcal{M}} \bar\nabla P^\bot_{ij} \bar\nabla u^j\bar\nabla P^\bot_{ik}\bar\nabla P_{ks}(v-u)^sdv_{\bar g}
\\
=&-\int_{\mathcal{M}} |\bar\nabla u|^4 u \cdot (v-u) dv_{\bar g}\geq -2\int_{\mathcal{M}} |\bar\nabla u|^4 |(v-u)^\bot| dv_{\bar g}\\
\geq& -C\int_{\mathcal{M}} |v-u|^2|\bar\nabla u|^4dv_{\bar g} \,.
\end{align*}
Then by \eqref{claim} and \eqref{claim-2} we have
\beq \label{termIV}
\textbf{IV} \geq - C\epsilon_0^{1/4} \int_{\mathcal{M}}|\bar\Delta (v - u)|^2dv_{\bar g} \,.
\eeq
Now \eqref{difference laplace} follows directly by combining the estimates above.
\end{proof}

\begin{lem}\label{key estimate3}
There exists $\epsilon_0>0$ depending only on $\mathcal{M}$ such that if $u,v$ are as in Theorem \ref{convexity}, then we have
\begin{align}\label{difference tension}
 &\int_{\mathcal{M}}|\bar \tau(v)|^2dv_{\bar g}-\int_{\mathcal{M}}|\bar \tau(u)|^2dv_{\bar g}-\int_{\mathcal{M}}|\bar \tau(v)-\bar \tau(u)|^2dv_{\bar g}\notag \\
 \geq&-C{\epsilon_0}^{1/4}\int_\mathcal{M}|\bar \Delta(v-u)|^2dv_{\bar g}\,.
\end{align}
\end{lem}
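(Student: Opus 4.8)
The plan is to reduce this tension-field estimate to the Laplacian estimate already established in Lemma \ref{key estimate2}, exploiting the orthogonal splitting $\bar\Delta w = \bar\tau(w) - w|\bar\nabla w|^2$ valid for maps into $\mathbf{S}^n$. Since $\bar\tau(w)$ is tangent to $\mathbf{S}^n$ while $w|\bar\nabla w|^2$ is normal, this yields the pointwise identity $|\bar\tau(w)|^2 = |\bar\Delta w|^2 - |\bar\nabla w|^4$ for $w=u,v$. Writing $\bar\tau(v)-\bar\tau(u) = \bar\Delta(v-u) + \Phi$ with $\Phi := v|\bar\nabla v|^2 - u|\bar\nabla u|^2$ and expanding the square, I would first record the algebraic identity (before integrating)
$$|\bar\tau(v)|^2 - |\bar\tau(u)|^2 - |\bar\tau(v)-\bar\tau(u)|^2 = Q_\Delta + \left(|\bar\nabla u|^4 - |\bar\nabla v|^4\right) - 2\langle\bar\Delta(v-u), \Phi\rangle - |\Phi|^2,$$
where $Q_\Delta := |\bar\Delta v|^2 - |\bar\Delta u|^2 - |\bar\Delta(v-u)|^2$ is precisely the integrand controlled by Lemma \ref{key estimate2}.

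Integrating and invoking Lemma \ref{key estimate2}, the term $\int_{\mathcal{M}} Q_\Delta\,dv_{\bar g}$ is bounded below by $-C\epsilon_0^{1/4}\int_{\mathcal{M}}|\bar\Delta(v-u)|^2\,dv_{\bar g}$ plus the residual gradient term $4\int_{\mathcal{M}}|\bar\nabla u|^2\bar\nabla u\cdot\bar\nabla(v-u)\,dv_{\bar g}$. The key point is an \emph{exact cancellation}: setting $\Psi := |\bar\nabla v|^2 - |\bar\nabla u|^2 = 2\bar\nabla u\cdot\bar\nabla(v-u) + |\bar\nabla(v-u)|^2$ and using $|\bar\nabla v|^2 + |\bar\nabla u|^2 = 2|\bar\nabla u|^2 + \Psi$, one computes
$$|\bar\nabla u|^4 - |\bar\nabla v|^4 = -4|\bar\nabla u|^2\,\bar\nabla u\cdot\bar\nabla(v-u) - 2|\bar\nabla u|^2|\bar\nabla(v-u)|^2 - \Psi^2,$$
so the first term annihilates the residual gradient term coming from Lemma \ref{key estimate2}. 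What survives is $-\int_{\mathcal{M}}\left(2|\bar\nabla u|^2|\bar\nabla(v-u)|^2 + \Psi^2\right)dv_{\bar g}$ together with $-2\int\langle\bar\Delta(v-u),\Phi\rangle$ and $-\int|\Phi|^2$.

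It then remains to bound each surviving term below by $-C\epsilon_0^{\beta}\int_{\mathcal{M}}|\bar\Delta(v-u)|^2\,dv_{\bar g}$ for some $\beta>0$. For this I would reuse the tools assembled in Lemma \ref{lhs}: the pointwise bound $|\bar\nabla u|(x)\leq C\epsilon_0^{1/4}\rho(x)^{-1}$ reduces $\int|\bar\nabla u|^2|\bar\nabla(v-u)|^2$ to $C\epsilon_0^{1/2}\int|\bar\nabla(v-u)|^2\rho^{-2}$, which the first-order Hardy inequality (Theorem \ref{firtHardy}) combined with the Hessian bound $\int|\bar\nabla^2(\cdot)|^2\leq C\int|\bar\Delta(\cdot)|^2$ from the proof of Theorem \ref{secondhardy-v2} controls by $C\epsilon_0^{1/2}\int|\bar\Delta(v-u)|^2$. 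The quartic term $\int|\bar\nabla(v-u)|^4$ is handled by interpolating its absolute smallness $\int|\bar\nabla(v-u)|^4\leq C\epsilon_0^{1/2}$ (from $\int|\bar\nabla v|^4\leq\epsilon_0$, \eqref{smallenergy-3}, and Minkowski in $L^4$) against \eqref{control-1}, giving $\int|\bar\nabla(v-u)|^4\leq C\epsilon_0^{1/4}\int|\bar\Delta(v-u)|^2$. Writing $\Phi = v\Psi + (v-u)|\bar\nabla u|^2$, the quantity $\int|\Phi|^2$ then reduces to the two estimates just obtained plus $\int|v-u|^2|\bar\nabla u|^4$, which is exactly the content of the claim \eqref{claim} and its consequence \eqref{claim-2} via the second-order Hardy inequality (Theorem \ref{secondhardy}); altogether $\int|\Phi|^2\leq C\epsilon_0^{1/4}\int|\bar\Delta(v-u)|^2$, and the sign-indefinite cross term is absorbed by Young's inequality.

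I expect the main obstacle to be the cross term $-2\int\langle\bar\Delta(v-u),\Phi\rangle$, which carries no sign and cannot be removed algebraically; its control rests entirely on the smallness $\int|\Phi|^2\leq C\epsilon_0^{\beta}\int|\bar\Delta(v-u)|^2$ with a genuinely small prefactor, whose bottleneck is the quartic gradient term $\int|\bar\nabla(v-u)|^4$. Because $\int|\bar\Delta(v-u)|^2$ is \emph{not} assumed small in Theorem \ref{convexity}, this quartic term cannot be dismissed as a higher-order perturbation and must instead be split by the interpolation above; ensuring the resulting prefactor is a positive power of $\epsilon_0$ rather than an $O(1)$ constant is the delicate step of the argument.
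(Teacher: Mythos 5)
Your argument is correct, and every ingredient you invoke (the pointwise bounds \eqref{gradientbound2}--\eqref{gradientbound3}, the Hardy inequalities, \eqref{control-1}, \eqref{claim}--\eqref{claim-2}, and the smallness \eqref{smallenergy-2}--\eqref{smallenergy-3}) is available at this point in the paper, but your route is genuinely different from the paper's. The paper also begins from $|\bar\tau(w)|^2=|\bar\Delta w|^2-|\bar\nabla w|^4$ and Lemma \ref{key estimate2}, but it then integrates by parts using the boundary conditions $u=v$, $\partial_\nu u=\partial_\nu v$ to regroup the mixed terms $2\int_{\mathcal{M}}\bigl(|\bar\nabla u|^2 u\,\bar\Delta v+|\bar\nabla v|^2 v\,\bar\Delta u\bigr)dv_{\bar g}$; after absorbing several errors it is left with the single term $2\int_{\mathcal{M}}\bar\tau(u)\cdot(v-u)\langle\bar\nabla(v+u),\bar\nabla(v-u)\rangle\,dv_{\bar g}$, which it controls using the tangency $\bar\tau(u)\cdot u=0$ together with the second-order Hardy inequality. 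You instead make the cancellation of the residual cubic term from Lemma \ref{key estimate2} purely algebraic and pointwise --- your identity $|\bar\nabla u|^4-|\bar\nabla v|^4=-4|\bar\nabla u|^2\,\bar\nabla u\cdot\bar\nabla(v-u)-2|\bar\nabla u|^2|\bar\nabla(v-u)|^2-\Psi^2$ is exactly right --- and are left with manifestly nonpositive terms plus one cross term $-2\int_{\mathcal{M}}\langle\bar\Delta(v-u),\Phi\rangle\,dv_{\bar g}$ handled by Cauchy--Schwarz. This buys transparency: no further integration by parts, no use of the tangency of $\bar\tau(u)$, and the exact origin of the cancellation is visible. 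The one thing you lose is the exponent: your bound $\int_{\mathcal{M}}|\Phi|^2 dv_{\bar g}\leq C\epsilon_0^{1/4}\int_{\mathcal{M}}|\bar\Delta(v-u)|^2 dv_{\bar g}$ (with bottleneck $\int_{\mathcal{M}}\Psi^2 dv_{\bar g}$, estimated via $\bigl(\int_{\mathcal{M}}|\bar\nabla(v+u)|^4\bigr)^{1/2}\bigl(\int_{\mathcal{M}}|\bar\nabla(v-u)|^4\bigr)^{1/2}$) turns the cross term into $-C\epsilon_0^{1/8}\int_{\mathcal{M}}|\bar\Delta(v-u)|^2 dv_{\bar g}$ rather than the stated $-C\epsilon_0^{1/4}$. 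This is harmless for the application --- the proof of Theorem \ref{convexity} only needs the prefactor to vanish as $\epsilon_0\to 0$ --- but as written your argument proves the lemma with $\epsilon_0^{1/8}$ in place of $\epsilon_0^{1/4}$.
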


\begin{proof}
It follows from Lemma \ref{key estimate2} and \eqref{difference laplace} that
\begin{align*}
\psi=&  \int_{\mathcal{M}} \vert \bar\tau(v)\vert^2  dv_{\bar g} -  \int_{\mathcal{M}} \vert\bar\tau(u)\vert^2  dv_{\bar g} - \int_{\mathcal{M}} \vert \bar\tau(v) - \bar\tau(u)\vert^2 dv_{\bar g}\\
=& \int_{\mathcal{M}} |\bar\Delta v|^2 dv_{\bar g} - \int_{\mathcal{M}} |\bar\Delta u|^2 dv_{\bar g} - \int_{\mathcal{M}} |\bar\Delta (v-u)|^2 dv_{\bar g}\\
&- \int_{\mathcal{M}} |\bar\nabla v|^4 - |\bar\nabla u|^4 dv_{\bar g} + \int_{\mathcal{M}} \left| v|\bar\nabla v|^2 + u|\bar\nabla u|^2\right|^2 dv_{\bar g}\\
&+ 2 \int_{\mathcal{M}} |\bar\nabla u|^2 u \bar\Delta v + |\bar\nabla v|^2 v\bar\Delta u\, dv_{\bar g}\\
\geq &- C{\epsilon_0}^{1/4} \int_{\mathcal{M}} |\bar\Delta (v-u)|^2 dv_{\bar g} + 4 \int_{\mathcal{M}} |\bar\nabla u|^2\bar\nabla u \bar\nabla (v-u) dv_{\bar g}\\
&- \int_{\mathcal{M}} |\bar\nabla v|^4 - |\bar\nabla u|^4 dv_{\bar g}
 + \int_{\mathcal{M}} \left| v|\bar\nabla v|^2 + u|\bar\nabla u|^2\right|^2 dv_{\bar g}\\
 &+ 2 \int_{\mathcal{M}} |\bar\nabla u|^2 u \bar\Delta v + |\bar\nabla v|^2 v\bar\Delta u\,  dv_{\bar g}.
\end{align*}
 Therefore,  using $u|_{\partial \mathcal{M}}=v|_{\partial \mathcal{M}}$ and $\partial_\nu u |_{\partial \mathcal{M}}=\partial_\nu v|_{\partial \mathcal{M}}$, we have

\begin{align*}
\psi\geq  &-C{\epsilon_0}^{1/4}\int_{\mathcal{M}} |\bar\Delta (v-u)|^2  dv_{\bar g} - 2 \int_{\mathcal{M}} \bar\nabla |\bar\nabla u|^2 u \,\bar\nabla v  dv_{\bar g} - 2 \int_{\mathcal{M}}  |\bar\nabla u|^2 u \bar\Delta v  \,dv_{\bar g} \\
& - 2 \int_{\mathcal{M}} \bar\nabla |\bar\nabla u|^2 v\bar \nabla u  \,dv_{\bar g} - 2 \int_{\mathcal{M}} |\bar\nabla u|^2 v \bar\Delta u  \,dv_{\bar g} - 4 \int_{\mathcal{M}}|\bar\nabla u|^4 dv_{\bar g} \\
& - \int_{\mathcal{M}}(|\bar\nabla v|^4 - |\bar\nabla u|^4) dv_{\bar g} + \int_{\mathcal{M}} \left| v|\bar\nabla v|^2 + u|\bar\nabla u|^2\right|^2  dv_{\bar g}\\
&+ 2 \int_{\mathcal{M}} |\bar\nabla u|^2 u \bar\Delta v + |\bar\nabla v|^2 v\bar\Delta u\, dv_{\bar g}\\
=&- C{\epsilon_0}^{1/4} \int_{\mathcal{M}} |\bar\Delta (v-u)|^2  dv_{\bar g} + \int_{\mathcal{M}} \bar\nabla |\bar\nabla u|^2 \bar\nabla |v-u|^2 dv_{\bar g}\\
&+ 2 \int_{\mathcal{M}}  v\bar\Delta u (|\bar\nabla v|^2 - |\bar\nabla u|^2)  dv_{\bar g} + 2\int_{\mathcal{M}} |\bar\nabla u|^2 uv (|\bar\nabla v|^2 - |\bar\nabla u|^2) dv_{\bar g}\\
&- \int_{\mathcal{M}} |v-u|^2|\bar\nabla u|^4 dv_{\bar g}\,,
\end{align*}
where we have used $|u|^2 = |v|^2 = 1$ so that $1-uv = \frac{1}{2}|v-u|^2$ and $u\bar\nabla v + v\bar\nabla u = -\frac{1}{2}\bar\nabla |v-u|^2$. Thus, we have
\begin{align*}
\psi& \geq - C{\epsilon_0}^{1/4}\int_{\mathcal{M}} |\bar\Delta (v-u)|^2  dv_{\bar g} + 2 \int_{\mathcal{M}} (\bar\Delta u + u|\bar\nabla u|^2) v(|\bar\nabla v|^2 - |\bar\nabla u|^2) dv_{\bar g} \\
& = - C{\epsilon_0}^{1/4} \int_{\mathcal{M}} |\bar\Delta (v-u)|^2 dv_{\bar g} + 2 \int_{\mathcal{M}}\bar\tau(u) \cdot (v-u) \langle\bar\nabla (v+u), \bar\nabla (v-u)\rangle \; dv_{\bar g} \\
&\geq - C{\epsilon_0}^{1/4} \int_{\mathcal{M}} |\bar\Delta (v-u)|^2  dv_{\bar g}\\
&\quad - 2 \left(\int_{\mathcal{M}}|\bar\tau(u)|^2|v-u|^2 dv_{\bar g}\right)^{\frac{1}{2}}
\left(\int_{\mathcal{M}}|\bar\nabla (v+u)|^4 dv_{\bar g}\right)^{\frac{1}{4}}\left(\int_{\mathcal{M}}|\bar\nabla (v-u)|^4 dv_{\bar g}\right)^{\frac{1}{4}}\\
&\geq - C{\epsilon_0}^{1/4} \int_{\mathcal{M}} |\bar\Delta (v-u)|^2 \; dv_{\bar g} \,,
\end{align*}
where we used \eqref{control-1}, \eqref{gradientbound2}, \eqref{gradientbound3}, Hardy inequality (Theorem \ref{secondhardy}), \eqref{smallenergy-2} and \eqref{smallenergy-3}.
\end{proof}

\begin{proof}[Proof of Theorem \ref{convexity}]
From \eqref{claim-2}, we have
\begin{equation}\label{laplace-com}
    \int_{\mathcal{M}}|\Delta v-\Delta u|^2dv_g\leq C\int_{\mathcal{M}}|\bar\Delta v-\bar\Delta u|^2 dv_{\bar g}\,.
\end{equation}
Combining \eqref{tau laplace} and \eqref{difference tension} we get
\begin{align}\label{taudiff}
 &\int_{\mathcal{M}}|\bar \tau(u)|^2dv_{\bar g}-\int_{\mathcal{M}}|\bar \tau(v)|^2dv_{\bar g}+\int_{\mathcal{M}}|\bar \tau(v)-\bar \tau(u)|^2dv_{\bar g}\notag \\
 \leq& \,C\epsilon_0^{1/4} \int_{\mathcal{M}}|\bar \tau(v)-\bar \tau(u)|^2dv_{\bar g}\,.
\end{align}
Now choosing $\epsilon_0$ sufficiently small we get (using \eqref{laplace-com} and \eqref{taudiff})
\begin{align*}
&\int_{\mathcal{M}}|\Delta v-\Delta u|^2dv_g \leq C\int_{\mathcal{M}}|\bar\Delta v-\bar\Delta u|^2 dv_{\bar g}\\
\leq & 4C\int_{\mathcal{M}}|\bar \tau(v)-\bar \tau(u)|^2dv_{\bar g} \leq 8C\left(\int_{\mathcal{M}}|\bar \tau(v)|^2dv_{\bar g}-\int_{\mathcal{M}}|\bar \tau(u)|^2dv_{\bar g}\right)\\
=&8C\sum_{i=1,\cdots, N}\left(\int_{\Omega_i}|\bar \tau(v)|^2dv_{\bar g}-\int_{\Omega_i}|\bar \tau(u)|^2dv_{\bar g}\right) = 8C\sum_{i=1,\cdots, N}\left(\mathcal{E}(v, \Omega_i) - \mathcal{E}(u, \Omega_i)\right)\\
=& 8C\left(\mathcal{E}(v, \mathcal{M}) - \mathcal{E}(u, \mathcal{M})\right)\,.
\end{align*}
This completes the proof of Theorem \ref{convexity}.
\end{proof}

\section{Conflict of interest and Data availability}
No conflict of interest exits in the submission of this manuscript,
and manuscript is approved by all authors for publication.
Data sharing not applicable to this article as no datasets were generated or analyzed in this study.

\appendix

\section{Conformal invariance of $\mathcal{E}(u,\mathcal{M})$}\label{appA}

In this appendix we give a quick check of the conformal invariance of $\mathcal{E}(u,\mathcal{M})$ in four dimensions which does not seem to be in the literature. Let $\mathcal{M}$ be a smooth Riemannian 4-manifold with or without boundary and $\mathcal{N}$ be another smooth Riemannian manifold. Given $u\in C^2(\mathcal{M},\mathcal{N})$, we denote by $u^*T\mathcal{N}$ the pull-back on $(\mathcal{M},g)$ of the tangent bundle of $(\mathcal{N},h)\hookrightarrow \mathbf{R}^K$. Then the tension field $\tau(u)\in u^*T\mathcal{N}$ is defined by 
\begin{equation}
\tau(u)=\sum_{\alpha}{\tilde\nabla}_{e_\alpha}du(e_\alpha),
\end{equation}
where $\{e_\alpha\}$ is an orthonormal frame of $T\mathcal{M}$ and $\tilde{\nabla}$ is the metric connection on $T^*\mathcal{M}\otimes u^*T\mathcal{N}$. Fix local coordinates $\{x^\alpha\}$ and $\{y^i\}$ of $\mathcal{M}$ and $\mathcal{N}$ respectively, we have
\begin{equation}
\tau^i(u)=g^{\alpha\beta}\frac{\partial^2u^i}{\partial x^\alpha\partial x^\beta}-g^{\alpha\beta}\Gamma^\gamma_{\alpha\beta}\frac{\partial u^i}{\partial x^\gamma}+g^{\alpha\beta}\bar{\Gamma}^i_{jk}\frac{\partial u^j}{\partial x^\alpha}\frac{\partial u^k}{\partial x^\beta},
\end{equation}
where $\Gamma$ and $\bar{\Gamma}$ are the Christoffel symbols on $\mathcal{M}$ and $\mathcal{N}$ respectively. Define $$A^i:=g^{\alpha\beta}\frac{\partial^2u^i}{\partial x^\alpha\partial x^\beta}+g^{\alpha\beta}\bar{\Gamma}^i_{jk}\frac{\partial u^j}{\partial x^\alpha}\frac{\partial u^k}{\partial x^\beta}.$$ 
Then 
\begin{equation}
\begin{split}
\tau^i(u)&=A^i-g^{\alpha\beta}\Gamma^\gamma_{\alpha\beta}\frac{\partial u^i}{\partial x^\gamma} =A^i-g^{\alpha\beta}\langle\nabla_{X_\alpha}X_\beta,du^i\rangle,
\end{split}
\end{equation}
where $\nabla= \nabla_g$ is the connection on $\mathcal{M}$ and $X_\alpha:=\frac{\partial}{\partial x^\alpha}$.
Now under the conformal change $\bar{g}=e^{2\phi}g$ (with $\phi\in C^{\infty}_0(\mathcal{M},\mathbf{R})$ if $\partial\mathcal{M}\neq\emptyset$), we have
\begin{equation}\label{A}
\bar{A}^i=e^{-2\phi}A^i
\end{equation}
and
\begin{equation}\label{christoffel}
\bar{g}^{\alpha\beta}\langle(\bar{\nabla}_{\bar{g}})_{X_\alpha}X_\beta,du^i\rangle=e^{-2\phi}g^{\alpha\beta}\langle\nabla_{X_\alpha}X_\beta,du^i\rangle-2e^{-2\phi}\langle\nabla\phi,du^i\rangle\,,
\end{equation}
where we have used 
\begin{equation}\label{conformalchange-1}
(\bar\nabla_{\bar{g}})_XY=\nabla_XY+X(\phi)Y+Y(\phi)X-g(X,Y)\nabla\phi.
\end{equation}
Therefore, combing \eqref{A} and \eqref{christoffel} together, we get
\begin{equation}
\bar\tau^i(u)=e^{-2\phi}\tau^i(u)+2e^{-2\phi}\langle\nabla\phi,du^i\rangle
\end{equation}
and
\begin{equation}\label{tau}
|\bar{\tau}(u)|^2-e^{-4\phi}|\tau(u)|^2=4e^{-4\phi}h_{ij}(u)\tau^j(u)\langle\nabla\phi,du^i\rangle+4e^{-4\phi}|\langle\nabla\phi,du\rangle|^2.
\end{equation}

Recall the following formulas for the conformal change of Ricci tensor and scalar curvature:
\begin{equation}
\overline{\rm Ric}={\rm Ric}-2({\rm Hess}(\phi)-\nabla\phi\otimes\nabla\phi)-(\Delta\phi+2|\nabla\phi|^2)g
\end{equation}
and 
\begin{equation}
\overline{\rm{Sc}}=e^{-2\phi}({\rm{Sc}}-6\Delta\phi-6|\nabla\phi|^2),
\end{equation}
where ${\rm Hess}(\phi)$ denotes the Hessian of $\phi$. Consequently, we obtain
\begin{align}\label{s}
\frac23\overline{\rm{Sc}}|du|_{\bar{g}}^2-\frac23e^{-4\phi}{\rm{Sc}}|du|^2_{g}&=\frac23e^{-4\phi}(-6\Delta\phi-6|\nabla\phi|^2)|du|^2_{g}\\
&=-4e^{-4\phi}(\Delta\phi+|\nabla\phi|^2)|du|^2_{g}\notag
\end{align}
and 
\begin{align}\label{ric}
&2\overline{\rm Ric}(du,du)-2e^{-4\phi}{\rm Ric}(du,du)\notag\\
=&2\bar{R}_{\alpha\beta}\bar{g}^{\alpha\gamma}\bar{g}^{\beta\sigma}\left\langle\frac{\partial u}{\partial x_\gamma},\frac{\partial u}{\partial x_\sigma}\right\rangle-2e^{-4\phi}{\rm Ric}(du,du)\notag\\
=&2e^{-4\phi}g^{\alpha\gamma}g^{\beta\sigma}\bigg((-\Delta\phi-2|\nabla\phi|^2)g_{\alpha\beta}-2(\nabla_\beta\phi_{\alpha}-\phi_\alpha\phi_\beta)\bigg)\langle u_\gamma,u_\sigma\rangle\notag\\
=&-4e^{-4\phi}|\nabla\phi|^2|du|_g^2+4e^{-4\phi}|\langle\nabla\phi,du\rangle|^2-2e^{-4\phi}\Delta\phi|du|_g^2\\\
&-4e^{-4\phi}\sum_{\alpha, \beta}\nabla^2_{\alpha\beta}\phi\left\langle\nabla_\alpha u,\nabla_\beta u\right\rangle\,,\notag
\end{align}
where Einstein summation convention was used and we used different subscripts to distinguish different metrics on the bundle $T^*\mathcal{M}\otimes u^*T\mathcal{N}$. Combining these we have
\begin{align}
&\int_\mathcal{M}\left[\eqref{tau}+\eqref{s}-\eqref{ric}\right]dv_{\bar{g}}\notag\\
=&\sum_{\alpha, \beta}\int_\mathcal{M} 2e^{-4\phi}\left(2h_{ij}(u)\tau^j(u)\langle\nabla\phi,du^i\rangle-\Delta\phi|du|_g^2+2\nabla^2_{\alpha\beta}\phi\langle\nabla_\alpha u,\nabla_\beta u\rangle\right) dv_{\bar{g}}\notag\\
=&{\sum_{\alpha, \beta}\int_\mathcal{M} 4\nabla_\alpha \phi \langle \Delta u,\nabla_\alpha{u}\rangle-2\Delta\phi|du|_g^2+4\nabla^2_{\alpha\beta}\phi\langle\nabla_\alpha u,\nabla_\beta u\rangle dv_{g}}\notag\\
=&\sum_{\alpha, \beta}\int_\mathcal{M} -2\Delta\phi|du|^2-4\nabla_{\beta}\phi\langle\nabla_\alpha u,\nabla_{\alpha\beta}^2u\rangle dv_{g}\notag\\
=&\sum_{\alpha, \beta}\int_\mathcal{M} 2\nabla_\beta{\phi}\nabla_\beta\langle\nabla_\alpha u,\nabla_\alpha u\rangle-4\nabla_{\beta}\phi\langle\nabla_\alpha u,\nabla^2_{\alpha\beta}u\rangle dv_{g}=0.\notag
\end{align}
Hence, the energy functional $\mathcal{E}(u,\mathcal{M})$ is conformally invariant in four dimensions. 
Now using the notations in \eqref{conformflat}, on each $\Omega_i, i=1,\cdots, N$ we have
$$g=e^{2\phi_i}\bar{g}\,,$$
where $\bar{g}$ is the standard Euclidean metric. By the above conformal change of $\mathcal{E}(\cdot, \mathcal{M})$, for any $w\in C^2(\mathcal{M},\mathcal{N})$ we get
\begin{align*}
&\mathcal{E}(w, (\mathcal{M},g)) \\
= &\sum_{i}\mathcal{E}(w, (\Omega_i,\bar{g}))+ \sum_{i,\alpha, \beta}\int_{\Omega_i} 4\bar\nabla_\alpha \phi_i \langle \bar\Delta w,\bar\nabla_\alpha w\rangle-2\bar\Delta\phi_i|\bar\nabla w|^2+4\bar\nabla^2_{\alpha\beta}\phi_i\langle\bar\nabla_\alpha w,\bar\nabla_\beta w\rangle dv_{\bar{g}}.
\end{align*}
Thus, for any variation $w(t)$ of $w$ such that $\left.\frac{d}{dt}\right|_{t=0}w(t)=P\phi\in T_w\mathcal{N}$ where $\phi \in C^\infty_0(\mathcal{M}, \mathbf{R}^K)$, the first variation formula for $\mathcal{E}(w, (\mathcal{M},g))$ is (cf. Appendix \ref{appB})
\begin{align}
&\left.\frac{d}{dt}\right|_{t=0}{\mathcal{E}}(w(t),(\mathcal{M},g))\label{firstvariation-1}\\
=&2\sum_i\int_{\Omega_i} e^{-4\phi_i}\big\langle P(\bar\Delta^2u)-P(A(\bar\nabla u,\bar\nabla u){D}_u A(\bar\nabla u,\bar\nabla u))\notag\\
&+2{\rm div}(A(\bar\nabla u,\bar\nabla u)A(\bar\nabla u,P))-2A(\bar\nabla u,\bar\nabla u)A(\bar\nabla u,\bar\nabla P), \phi\big\rangle_g dv_g\notag\\
&+\sum_{i,\alpha, \beta}\int_{\Omega_i} \big[4\bar\nabla_\alpha \phi_i \langle \bar\Delta (P\phi),\bar\nabla_\alpha w\rangle+ 4\bar\nabla_\alpha \phi_i \langle \bar\Delta w,\bar\nabla_\alpha (P\phi)\rangle\label{fir1}\\
&-4\bar\Delta\phi_i\langle\bar\nabla w, \nabla (P\phi)\rangle+8\bar\nabla^2_{\alpha\beta}\phi_i\langle\bar\nabla_\alpha (P\phi),\bar\nabla_\beta w\rangle \big]dv_{\bar{g}}\label{fir2}\,.
\end{align}
In particular, for any fixed $j=1,\cdots, N$ and any $\phi \in C^\infty_0(\Omega_j, \mathbf{R}^K)$ we have
\begin{align}\label{fir3}
&\left.\frac{d}{dt}\right|_{t=0}{\mathcal{E}}(w(t),(\mathcal{M},g))\notag\\
=&2\int_{\Omega_j} e^{-4\phi_j}\big\langle P(\bar\Delta^2u)-P(A(\bar\nabla u,\bar\nabla u){D}_u A(\bar\nabla u,\bar\nabla u))\\
&+2{\rm div}(A(\bar\nabla u,\bar\nabla u)A(\bar\nabla u,P))-2A(\bar\nabla u,\bar\nabla u)A(\bar\nabla u,\bar\nabla P), \phi\big\rangle_g dv_g\,,\notag
\end{align}
noting that since $\phi \in C^\infty_0(\Omega_j, \mathbf{R}^K)$, the terms in \eqref{fir1} and \eqref{fir2} cancel out to zero using integration by parts over $\Omega_j$. Therefore we have
\begin{prop}\label{local property}
If $u$ is a conformal-harmonic map on a locally conformally
flat $4$-manifold $(\mathcal{M},g)$, then $u$ is locally an intrinsic biharmonic map on $\mathcal{M}$ (with respect to the flat connection) up to a conformal change of $g$.
\end{prop}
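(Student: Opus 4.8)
The plan is to read the result directly off the first variation formula \eqref{firstvariation-1}--\eqref{fir2} established above, localized to a single conformal chart. The conceptual point is that on a flat chart the curvature terms in $\mathcal{E}$ drop out: since $\mathcal{M}$ is locally conformally flat, on each $\Omega_i$ we have $g = e^{2\phi_i}\bar g$ with $\bar g$ Euclidean, and because $\bar g$ is flat its scalar and Ricci curvatures vanish, so that $\mathcal{E}(w, (\Omega_i, \bar g)) = \int_{\Omega_i}|\bar\tau(w)|^2\,dv_{\bar g} = \mathcal{F}(w, (\Omega_i, \bar g))$ is exactly the flat intrinsic bi-energy. Hence proving local intrinsic biharmonicity with respect to $\bar g$ is the same as proving criticality for $\mathcal{F}(\cdot, (\Omega_j, \bar g))$.

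First I would fix an index $j$ and restrict attention to test functions $\phi \in C^\infty_0(\Omega_j, \mathbf{R}^K)$ supported in the single chart $\Omega_j$. Using the conformal change decomposition of $\mathcal{E}$ recorded above together with the first variation formula, integration by parts over $\Omega_j$ annihilates all the correction terms in \eqref{fir1}--\eqref{fir2} (they become boundary terms for compactly supported $\phi$), leaving precisely the localized first variation \eqref{fir3}. Since $u$ is assumed to be a conformal-harmonic map, this first variation vanishes for every such $\phi$.

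Finally, because the weight $e^{-4\phi_j}$ in \eqref{fir3} is smooth and strictly positive, the fundamental lemma of the calculus of variations forces the tangential Euler--Lagrange expression appearing inside the inner product to vanish pointwise on $\Omega_j$. Via the identities $\bar\nabla P^\perp = D_u P^\perp\,\bar\nabla u$, $\bar\nabla P^\perp P = A(u)(\bar\nabla u, P)$, and $\bar\nabla P^\perp\bar\nabla P = A(u)(\bar\nabla u, \bar\nabla P)$, this vanishing is exactly the intrinsic biharmonic map equation \eqref{ibe} with respect to the flat connection $\bar\nabla$, so $u$ is locally an intrinsic biharmonic map on $(\Omega_j, \bar g)$, i.e.\ up to the conformal change $g = e^{2\phi_j}\bar g$, which is the assertion of the proposition. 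The only step requiring any care is the matching of \eqref{fir3} with the standard form \eqref{ibe}, but this is purely notational since both arise as the first variation of $\int|\bar\tau(u)|^2$; no genuine analytic obstacle is present, the entire content having been front-loaded into the conformal invariance computation of Appendix \ref{appA}.
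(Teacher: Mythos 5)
Your proposal is correct and follows essentially the same route as the paper: both localize the first variation formula \eqref{firstvariation-1}--\eqref{fir2} to a single conformal chart, observe that for $\phi\in C^\infty_0(\Omega_j,\mathbf{R}^K)$ the correction terms \eqref{fir1}--\eqref{fir2} vanish after integration by parts, and read off from \eqref{fir3} that the flat intrinsic biharmonic map equation holds on $\Omega_j$. The only minor imprecision is your parenthetical that the correction terms ``become boundary terms''; in fact they cancel among themselves after integrating by parts (as in the closing computation of Appendix \ref{appA}), but this does not affect the argument.
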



\section{Conformal-harmonic map equation}\label{appB}

For the sake of completeness, in this appendix we include a derivation of the conformal-harmonic map equation \eqref{ELP}, i.e., the Euler-Lagrangian equation for $\mathcal{E}(u,\mathcal{M})$. Consider a variation $u(t)$ of $u$ such that $\left.\frac{d}{dt}\right|_{t=0}u(t)=P\phi\in T_u\mathcal{N}$ where $\phi \in C^\infty_0(\mathcal{M}, \mathbf{R}^K)$. Then 
\begin{align}\label{1stvariation}
&\left.\frac{d}{d t}\right|_{t=0}\mathcal{E}(u(t),\mathcal{M})\notag\\
=&\int_\mathcal{M}\frac{d}{d t}\bigg|_{t=0}|\tau(u(t))|^2+\frac23 {\rm Sc}^{\mathcal{M}}\frac{d}{ d t}\bigg|_{t=0}|d u(t)|^2-2\frac{d}{d t}\bigg|_{t=0}{\rm Ric}^\mathcal{M}(d u(t), d u(t))\notag\\
:=&\,{\textbf{I}}+{\textbf{II}}+{\textbf{III}}.
\end{align}
We compute the three terms in \eqref{1stvariation} as follow. Note that the first term {\textbf{I}} is the first variation of the intrinsic bienergy $\int_{\mathcal{M}}|\tau(u)|^2$ and therefore yields exactly the intrinsic biharmonic map equation.
\begin{align}\label{fir4-biharmonic}
{\textbf{I}}=&\frac{d}{d t}\bigg|_{t=0}\int_\mathcal{M} \left|\Delta u(t)|^2-|A(u(t))(\nabla u(t),\nabla u(t)) \right|^2\notag\\
=&2\int_\mathcal{M}\langle\Delta u,\Delta(P\phi)\rangle-A(\nabla u,\nabla u)(D_u A(\nabla u,\nabla u)P\phi\notag+2A(\nabla u,\nabla(P\phi)))\\
=&2\int_\mathcal{M}\langle\Delta^2u,P\phi\rangle-\langle A(\nabla u,\nabla u)D_u A(\nabla u,\nabla u),P\phi\rangle \notag\\
&+2\langle\nabla(A(\nabla u,\nabla u)A(\nabla u,\cdot)),P\phi\rangle\notag\\
=&2\int_\mathcal{M}\langle P(\Delta^2u)-P(A(\nabla u,\nabla u)D_u A(\nabla u,\nabla u))\\
&+2{\rm div}(A(\nabla u,\nabla u)A(\nabla u,P))-2A(\nabla u,\nabla u)A(\nabla u,\nabla P),\phi\rangle.\notag
\end{align}
Now let's look at terms {\textbf{II}} and {\textbf{III}} which give the additional lower order terms to the intrinsic biharmonic map equation that make the conformal-harmonic map equation \eqref{ELP} conformally invariant.

\begin{align}
{\textbf{II}}&=\frac23\int_\mathcal{M} {\rm Sc}^\mathcal{M}\frac{d}{d t}\bigg|_{t=0}|d u(t)|^2\notag =\frac43\int_\mathcal{M}\langle {\rm Sc}^\mathcal{M} \nabla u, \nabla{(P\phi)}\rangle\\ &=\frac43\int_\mathcal{M}\langle {\rm Sc}^\mathcal{M}\nabla_{e_\alpha}u,\nabla_{e_\alpha}(P\phi)\rangle =-\frac43\int_\mathcal{M}\langle \nabla_{e_\alpha}({\rm Sc}^\mathcal{M} \nabla_{e_\alpha}u),P\phi\rangle\notag\\
&=-\frac43\int_\mathcal{M}\langle {\rm Sc}^\mathcal{M}\Delta u+\nabla {\rm Sc}^\mathcal{M}\nabla u,P\phi\rangle \notag\\
&=-\frac43\int_\mathcal{M}\langle {\rm Sc}^\mathcal{M}(\Delta u+A(u)(\nabla{u},\nabla{u}))+\nabla {\rm Sc}^\mathcal{M}\nabla u,\phi\rangle.
\end{align}

\begin{align}
{\textbf{III}}
&=-2\int_\mathcal{M}\frac{d}{dt}\bigg|_{t=0}{\rm Ric}^\mathcal{M}(d u(t),d u(t))\notag\\
&=-4\int_\mathcal{M}{\rm Ric}^\mathcal{M}(d u,d(P\phi))\notag\\
&=-4\int_\mathcal{M}\langle{{\rm Ric}^\mathcal{M}(du,\cdot),d(P\phi)}\rangle\notag\\
&=4\int_\mathcal{M}\langle\nabla_{e_\beta}({\rm Ric}^\mathcal{M})^{\alpha\beta}\nabla_{e_\alpha}u+({\rm Ric}^\mathcal{M})^{\alpha\beta}\nabla^2_{\alpha\beta}{u},P\phi\rangle\notag\\
&=4\int_\mathcal{M}\langle\nabla_{\beta}({\rm Ric}^\mathcal{M})^{\alpha\beta}\nabla_{\alpha}u+({\rm Ric}^\mathcal{M})^{\alpha\beta}P(\nabla^2_{\alpha\beta}{u}),\phi\rangle,
\end{align}
where we used the subscripts $\alpha,\beta$ to denote $e_\alpha, e_\beta$ for short.


Combining these together, we get
\begin{align}
\frac{d}{d t}\bigg|_{t=0}{\mathcal{E}}(u(t),\mathcal{M})
=&2\int_\mathcal{M}\big\langle P(\Delta^2u)-P(A(\nabla u,\nabla u){D}_u A(\nabla u,\nabla u))\notag\\
&+2{\rm div}(A(\nabla u,\nabla u)A(\nabla u,P))-2A(\nabla u,\nabla u)A(\nabla u,\nabla P)\notag\\
&-\frac23{\rm Sc}^\mathcal{M}(\Delta u+A(u)(\nabla{u},\nabla{u}))-\frac23\nabla {\rm Sc}^\mathcal{M}\nabla u\\
&+2\nabla_{\beta}({\rm Ric}^\mathcal{M})^{\alpha\beta}\nabla_{\alpha}u+2({\rm Ric}^\mathcal{M})^{\alpha\beta}P(\nabla^2_{\alpha\beta}{u}),\phi\big\rangle.\notag
\end{align}

Thus, the critical point $u$ of $\mathcal{E}$ satisfies the fourth order PDE:
\begin{align}\label{horizontal}
P(\Delta^2u)
=&P(A(\nabla u,\nabla u){D}_u A(\nabla u,\nabla u))-2{\rm div}(A(\nabla u,\nabla u)A(\nabla u,P))\notag\\
&+2A(\nabla u,\nabla u)A(\nabla u,\nabla P)+\frac23{\rm Sc}^\mathcal{M}(\Delta u+A(u)(\nabla{u},\nabla{u}))\\
&+\frac23\nabla {\rm Sc}^\mathcal{M}\nabla u-2\nabla_{\beta}({\rm Ric}^\mathcal{M})^{\alpha\beta}\nabla_{\alpha}u-2({\rm Ric}^\mathcal{M})^{\alpha\beta}P(\nabla^2_{\alpha\beta}{u}).\notag
\end{align}

Note that
\begin{align}
P^\bot(\Delta^2u) &= \text{div}(P^\bot \nabla \Delta u) - \nabla P^\bot \nabla \Delta u\notag\\
&= - \Delta A(\nabla u, \nabla u)-\Delta P^\bot\Delta u - 2\nabla P^\bot\nabla \Delta u\notag\\
&= - \Delta A(\nabla u, \nabla u)+\Delta P\Delta u + 2\nabla P\nabla \Delta u\\
&=-\Delta(A(\nabla u,\nabla u))-\Delta P\Delta u+2\text{div}(\nabla P\Delta u)\,,\notag
\end{align}
and 
\begin{align*}
\Delta^2u &= P(\Delta^2u) + P^\bot(\Delta^2u)\\
&=P(\Delta^2u)-\Delta(A(\nabla u,\nabla u))-\Delta P\Delta u+2\text{div}(\nabla P\Delta u)\,.
\end{align*}

Therefore, \eqref{horizontal} can be rewritten as
\begin{align}
\Delta^2u =&-\Delta(A(\nabla u,\nabla u))-\Delta P\Delta u +2{\rm div}(\nabla P\Delta u)\notag\\
&+P(A(\nabla u,\nabla u){D}_u A(\nabla u,\nabla u))-2{\rm div}(A(\nabla u,\nabla u)A(\nabla u,P))\notag\\
&+2A(\nabla u,\nabla u)A(\nabla u,\nabla P)+\frac23{\rm Sc}^\mathcal{M}(\Delta u+A(u)(\nabla{u},\nabla{u}))\\
&+\frac23\nabla {\rm Sc}^{\mathcal{M}}\nabla u-2\nabla\text{Ric}^\mathcal{M}(\nabla u, \cdot)-2{\rm Ric}^\mathcal{M}(P(\nabla^2u), \cdot)\notag
\end{align}
or equivalently (c.f. \cite[equation (1.2)]{LL21}, \cite[equation (9)]{LaR})
\begin{align}
\Delta^2u  = &-\Delta(\nabla P^\bot\nabla u)-{\rm div}(\nabla P^\bot\Delta u)+2\nabla P^\bot\nabla(\nabla P^\bot\nabla u)\notag\\
&+2\nabla P^\bot\nabla P^\bot\Delta u-(\nabla PP^\bot-P^\bot\nabla P)\nabla\Delta u\notag\\
&+P\langle\nabla P^\bot\nabla u,{D}_u({D}_u P^\bot)\nabla u\nabla u\rangle-2{\rm div}\langle\nabla P^\bot\nabla u,\nabla P^\bot P\rangle\\
&+2\langle\nabla P^\bot\nabla u,\nabla P^\bot\nabla P\rangle+\frac23 {\rm Sc}^\mathcal{M}(\Delta u+\nabla P^\perp\nabla u) \notag\\
&+\frac23\nabla {\rm Sc}^{\mathcal{M}}\nabla u-2\nabla\text{Ric}^\mathcal{M}(\nabla u, \cdot)-2{\rm Ric}^\mathcal{M}(P(\nabla^2u), \cdot)\,,\notag
\end{align}
where
$$
\nabla\text{Ric}^\mathcal{M}(\nabla u, \cdot):=\nabla_{\beta}({\rm Ric}^\mathcal{M})^{\alpha\beta}\nabla_{\alpha}u
$$
and
$$
{\rm Ric}^\mathcal{M}(P(\nabla^2u), \cdot):=({\rm Ric}^\mathcal{M})^{\alpha\beta}P(\nabla^2_{\alpha\beta}{u})\,.
$$

\bibliographystyle{amsplain}
\bibliography{reference}
\end{document}